\documentclass[reqno]{amsart}
\usepackage{lmodern}
\usepackage[T1]{fontenc}
\usepackage[utf8]{inputenc}

\usepackage{amssymb,amsfonts,amsmath, mathtools}
\usepackage[dvipsnames]{xcolor}
\usepackage{enumitem}
\usepackage{csquotes} 
\usepackage{booktabs} 
\usepackage{array}
\usepackage{stackengine}
\usepackage{caption}
\usepackage{bm,bbm}
\usepackage{tikz}
\usepackage{tikz-cd}
\usetikzlibrary{calc}
\usepackage{xargs}            

\definecolor{DefColor}{RGB}{139,30,30}
\definecolor{citeblue}{RGB}{0,4,37}
\definecolor{crefcolor}{RGB}{10,30,139}

\usepackage[pagebackref]{hyperref}
\hypersetup{
    colorlinks=true,
    linkcolor=crefcolor,
    citecolor=PineGreen,
    urlcolor=Plum
}
\usepackage[nameinlink,capitalise,noabbrev]{cleveref}
\renewcommand*{\backref}[1]{}
\renewcommand*{\backrefalt}[4]{%
	\ifcase #1%
	\or (Cited on page~#2.)%
	\else (Cited on pages~#2.)%
	\fi%
}

\newcommand{\mdef}[1]{\textcolor{DefColor}{#1}} 
\newcommand{\tdef}[1]{\textit{\mdef{#1}}}

\newtheorem{thm}{Theorem}[section]
\newtheorem*{thm*}{Theorem}

\newtheorem*{proposition*}{Proposition}
\newtheorem{observation}[thm]{Observation}

\newtheorem{theorem}[thm]{Theorem}
\newtheorem{corollary}[thm]{Corollary}
\newtheorem{proposition}[thm]{Proposition}
\newtheorem{lemma}[thm]{Lemma}

\newtheorem{fact}[thm]{Fact}
\newtheorem*{question*}{Question}

\newtheorem*{theorem*}{Theorem} 
\newtheorem*{conjecture*}{Conjecture}
\theoremstyle{definition}       
\newtheorem{definition}[thm]{Definition}
\newtheorem*{definition*}{Definition}

\newtheorem{example}[thm]{Example}

\newtheorem{remark}[thm]{Remark}

\newcommand{\bD}{\mathbb{D}}

\DeclareMathOperator{\cA}{\mathcal{A}}

\DeclareMathOperator{\cC}{\mathcal{C}}
\DeclareMathOperator{\cD}{\mathcal{D}}
\DeclareMathOperator{\cE}{\mathcal{E}}
\DeclareMathOperator{\cF}{\mathcal{F}}
\DeclareMathOperator{\cG}{\mathcal{G}}

\newcommand{\cK}{\mathcal{K}}

\DeclareMathOperator{\cO}{\mathcal{O}}
\DeclareMathOperator{\cP}{\mathcal{P}}

\newcommand{\cR}{\mathcal{R}}
\newcommand{\cI}{\mathcal{I}}

\newcommand{\cW}{\mathcal{W}}
\newcommand{\cX}{\mathcal{X}}

\DeclareMathOperator{\Cond}{Cond}
\DeclareMathOperator{\ExtrDisc}{ExtrDisc}

\DeclareRobustCommand{\minwidthbox}[2]{%
	\mathmakebox[\ifdim#2<\width\width\else#2\fi]{#1}%
}

\newcommand{\too}[1][]{\xrightarrow{\minwidthbox{#1}{1em}}}

\newcommand{\isoo}{ \xrightarrow{\; \; \simeq \;\;}}
\newcommand{\into}{\hookrightarrow}%

\tikzset{curve/.style={settings={#1},to path={(\tikztostart)
			.. controls ($(\tikztostart)!\pv{pos}!(\tikztotarget)!\pv{height}!270:(\tikztotarget)$)
			and ($(\tikztostart)!1-\pv{pos}!(\tikztotarget)!\pv{height}!270:(\tikztotarget)$)
			.. (\tikztotarget)\tikztonodes}},
	settings/.code={\tikzset{quiver/.cd,#1}
		\def\pv##1{\pgfkeysvalueof{/tikz/quiver/##1}}},
	quiver/.cd,pos/.initial=0.35,height/.initial=0}

\tikzset{between/.style n args={2}{/tikz/execute at end to={
			\tikzset{spath/split at keep middle={current}{#1}{#2}}
}}}

\tikzset{tail reversed/.code={\pgfsetarrowsstart{tikzcd to}}}
\tikzset{2tail/.code={\pgfsetarrowsstart{Implies[reversed]}}}
\tikzset{2tail reversed/.code={\pgfsetarrowsstart{Implies}}}
\tikzset{no body/.style={/tikz/dash pattern=on 0 off 1mm}}

\newcommand{\tCat}{\mathbb{C}\mathrm{at}}

\newcommand{\CatRotimes}{\mathbb{C}\mathrm{at}_R^{\otimes}}

\newcommand{\Catotimes}{\mathbb{C}\mathrm{at}^{\otimes}}
\newcommand{\PrL}{\mathrm{Pr}^L} 

\DeclareMathOperator{\Cat}{Cat}
\DeclareMathOperator{\Catl}{\widehat{\Cat}}

\newcommand{\PrLst}{\mathrm{Pr}^L_{\mathrm{st}}} 
 
\DeclareMathOperator{\Top}{Top}
\DeclareMathOperator{\An}{An}
\DeclareMathOperator{\Spc}{\An}          
\DeclareMathOperator{\Sp}{Sp} 

\DeclareFontFamily{U}{dmjhira}{}
\DeclareFontShape{U}{dmjhira}{m}{n}{ <-> dmjhira }{}

\DeclareMathOperator{\ad}{ad}

\DeclareMathOperator{\corp}{corp}

\DeclareMathOperator{\CMon}{CMon}

\DeclareMathOperator{\CAlg}{CAlg}

\DeclareMathOperator{\Sch}{Sch}
\newcommand{\et}{\text{ét}}

\DeclareMathOperator{\Div}{\Div}

\DeclareMathOperator{\Rig}{Rig}
\DeclareMathOperator{\dbl}{dbl}

\DeclareMathOperator{\Sh}{Sh}

\DeclareMathOperator{\Fun}{Fun}
\DeclareMathOperator{\const}{const}
\DeclareMathOperator{\Mod}{Mod}
\newcommand{\Map}{\operatorname{Map}} 

\newcommand{\colim}{\operatorname*{colim}}
\DeclareMathOperator{\pt}{\ast}

\DeclareMathOperator{\op}{op}

\DeclareMathOperator{\ev}{ev}

\DeclareMathOperator{\id}{id}

\DeclareMathOperator{\Lan}{Lan}

\DeclareMathOperator{\fgt}{fgt}
\newcommand{\co}{\text{co}} 

\newcommand{\laxlim}{\operatorname*{laxlim}}

\newcommand{\laxlimdag}{\operatorname*{laxlim^\dagger}}
\newcommand{\laxcolimdag}{\operatorname*{laxcolim^\dagger}}
\newcommand{\parlaxlim}{\laxlimdag}
\newcommand{\oplaxcolim}{\operatorname*{oplaxcolim}} 
\newcommand{\oplaxcolimdag}    {\operatorname*{oplaxcolim^\dagger}}
\newcommand{\oplaxlim}    {\operatorname*{oplaxlim}}
\newcommand{\oplaxlimdag}    {\operatorname*{oplaxlim^\dagger}} 

\newcommand{\Ar}{\operatorname{Ar}}
\newcommand{\Un}{\operatorname{Un}}
\newcommand{\open}{\mathrm{open}}
\newcommand{\lax}{\mathrm{lax}}
\newcommand{\oplax}{\mathrm{oplax}}
\DeclareMathOperator{\Nat}{Nat}
\newcommand{\ct}{\mathrm{ct}}
\DeclareMathOperator{\CHaus}{CHaus}
\DeclareMathOperator{\Mfld}{Mfld}
\newcommand{\cts}{\mathrm{cts}}
\newcommand{\GAL}{\operatorname{GAL}}
\newcommand{\Gal}{\operatorname{Gal}}
\newcommand{\post}{\mathrm{post}}
\newcommand{\Tw}{\operatorname{Tw}}

\makeatletter
\usepackage[normalem]{ulem}
\usepackage{contour}
\usepackage{mathtools} 

\contourlength{0.8pt} 

\makeatletter
\DeclareRobustCommand{\myuline}[2][0pt]{%
	\ifmmode
	\uline{\hphantom{#2}\kern-#1}%
	\kern#1%
	\mathllap{\mathpalette\my@cont@{#2}}%
	\else
	\uline{\phantom{#2}\kern-#1}%
	\kern#1%
	\llap{\contour{white}{#2}}%
	\fi
}
\newcommand{\my@cont@}[2]{\contour{white}{\mbox{$\m@th#1#2$}}}
\makeatother

\newcommand{\ul}{\myuline}

\usepackage[colorinlistoftodos,prependcaption,textsize=tiny]{todonotes}
\newcommandx{\unsure}[2][1=]{\todo[linecolor=red,backgroundcolor=red!25,bordercolor=red,#1]{#2}}
\newcommandx{\change}[2][1=]{\todo[linecolor=blue,backgroundcolor=blue!25,bordercolor=blue,#1]{#2}}
\newcommandx{\info}[2][1=]{\todo[linecolor=OliveGreen,backgroundcolor=OliveGreen!25,bordercolor=OliveGreen,#1]{#2}}
\newcommandx{\thiswillnotshow}[2][1=]{\todo[disable,#1]{#2}}

\theoremstyle{theorem}  
\newtheorem{mainthm}{Theorem}

\newcommand{\notehelper}[3]{\textcolor{#3}{$\blacksquare$}\marginpar{\ifodd\thepage\raggedright\else\raggedleft\fi\color{#3}\tiny \textbf{#2:} #1}}
\DeclareFontFamily{U}{min}{}
\DeclareFontShape{U}{min}{m}{n}{<-> udmj30}{}

\usepackage{comment}
\usepackage[a4paper, margin=1in]{geometry}

\title{Gros Topoi as partially lax limits of petit topoi}
\author{Fabio Neugebauer, Qi Zhu}
\date{\today}

\begin{document}
\begin{abstract}
    We prove that the sheaf $\infty$-topos associated to a geometric site in the sense of Lurie can be written as a partially lax limit of smaller sheaf $\infty$-topoi. This is thus a formalization of Lurie's vision for fractured $\infty$-topoi to axiomatize the relation between gros and petit topoi. As a consequence, we realize the $\infty$-topos of $\kappa$-small condensed anima as a partially lax limit of sheaf $\infty$-topoi over extremally disconnected spaces, marked at the open embeddings. Moreover, we deduce a gros version of the étale exodromy theorem.
\end{abstract}
\maketitle
\tableofcontents
\section{Introduction}
\noindent Geometric information is often organized in a global or in a local way. One virtue of this principle in topos theory is realized by so-called \emph{gros} or \emph{petit} topoi -- in the example of topological spaces $\Top$ this is the distinction between sheaves on all (small) topological spaces $\Sh(\Top)$ and sheaves $\Sh(X)$ on specific spaces $X$. On the other hand, a global object is ideally an amalgamation of its local information along with suitable gluing information: a sheaf $\mathcal{F} \in \Sh(\Top)$ should be recoverable from a collection of sheaves $\{\mathcal{F}_X \in \Sh(X) \}_{X \in \Top}$ together with certain compatibilities. 

\medskip \noindent Our main theorem is a formalization of this intuition, for which we use Lurie's notion of geometric sites \cite[Section 20]{SAG}. It consists of a site $(\cG, \tau)$ together with a family of admissible morphisms $\cG^{\ad} \subset \cG$ and suitable compatibilities, see \cref{def: geometric site}. In this language, the open embeddings on $\Top$ form a class of admissible morphisms, making our introductory example an instance of this.
\begin{mainthm}[{\Cref{thm:main-theorem}}] \label{mainthm: A}
    Let $(\cG, \cG^{\ad}, \tau)$ be a geometric site and $\cC$ a presentably symmetric monoidal $\infty$-category. There is an equivalence of symmetric monoidal $\infty$-categories
    \[ \Sh(\cG;\cC) \xrightarrow{\; \; \simeq \;\;} \parlaxlim_{X \in \cG^{\op}} \Sh \left(\cG_{/X}^{\ad};\cC \right) \]
    describing the $\infty$-category of $\cC$-valued sheaves on $\cG$ as a partially lax limit indexed by the marked $\infty$-category $(\cG^{\ad})^{\op}\subset  \cG^{\op}$.
\end{mainthm}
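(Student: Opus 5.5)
The strategy is to reduce to a statement about presheaves and then cut down to sheaves on both sides. First I would fix a model for partially lax limits. For a functor $F\colon \cI \to \Cat$ with marking $\cI^\dagger\subset \cI$, I would take the $\infty$-category of sections of the cocartesian unstraightening $\Un(F) \to \cI$ that send marked morphisms to cocartesian edges. Here $\cI = \cG^{\op}$, marked at $(\cG^{\ad})^{\op}$, and the functor is $X \mapsto \Sh(\cG^{\ad}_{/X};\cC)$. Its transition maps, for $f\colon Y \to X$ in $\cG$, are pullback along $f$: restriction along $\cG^{\ad}_{/X}\to\cG^{\ad}_{/Y}$ is not available in general, so I would use the left Kan extension/sheafification $f^*$. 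The key input is that admissible morphisms are stable under pullback along arbitrary maps; this is part of \cref{def: geometric site}.

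The comparison functor $\Phi$ sends $\cF \in \Sh(\cG;\cC)$ to the family $\cF|_{\cG^{\ad}_{/X}}$, i.e.\ restriction along the forgetful functor $\cG^{\ad}_{/X}\to\cG$. A morphism $f\colon Y\to X$ induces a comparison $f^*(\cF|_X)\to \cF|_Y$. When $f$ is admissible, $\cG^{\ad}_{/Y}\simeq(\cG^{\ad}_{/X})_{/Y}$, so this map is an equivalence; hence $\Phi$ lands in the partially lax limit. Symmetric monoidality is handled by noting that all restrictions are symmetric monoidal for the pointwise (sheafified) tensor product, and that partially lax limits of symmetric monoidal categories along lax/strong monoidal transition maps are computed underlying. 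I would package this as a diagram in $\CAlg(\Catl)$, so that it suffices to check that the underlying functor is an equivalence.

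To construct the inverse, send a section $(\cF_X)_X$ to the presheaf $X \mapsto \cF_X(\id_X)$. A morphism $f\colon Y\to X$ gives
\[
\cF_X(\id_X)\to (f^*\cF_X)(\id_Y)\to \cF_Y(\id_Y),
\]
using the lax structure. Functoriality comes from the coherence of sections, which can be phrased as a right Kan extension along $\cG^{\op}\to \Un$ restricted to the "diagonal" objects $(X,\id_X)$. Cleanly, both sides are full subcategories of the corresponding constructions for presheaves. For presheaves, the lax-limit description reduces to the formal identity
\[
\Fun(\cG^{\op},\cC)\simeq \text{sections with marked edges cocartesian}.
\]
I would prove this by exhibiting the cofinality/initiality of $\{(X,\id_X)\}$ inside the total category $\int_{X}\cG^{\ad}_{/X}$ relative to the marking. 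The point is that a pair $(X, U\to X \text{ admissible})$ is connected to $(U,\id_U)$ by the marked edge $U\to X$, along which the section must be cocartesian.

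Finally, I would check that the sheaf conditions match. Each covering sieve in $\cG$ can be refined by admissible coverings (again by \cref{def: geometric site}), so $\cF$ is a $\tau$-sheaf if and only if each $\cF|_{\cG^{\ad}_{/X}}$ is a sheaf for the induced topology. The main obstacle is the presheaf-level equivalence, and specifically keeping the higher coherences of the lax structure under control. I would handle it with a Grothendieck-construction argument: identify the partially lax limit with functors out of the localization of $\int\cG^{\ad}_{/-}$ at the marked cocartesian edges, and show that this localization is equivalent to $\cG^{\op}$ via $(X,U\to X)\mapsto U$.
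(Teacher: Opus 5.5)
This is essentially the paper's proof. Your localization of $\int\cG^{\ad}_{/-}\simeq\Ar^{\ad}(\cG)$ at the postcomposition edges $\id_U\to(U\to X)$ onto $\cG$ via the source functor is exactly the paper's argument with the adjunction $\const\dashv s$, whose counit is that edge; this applies once the lax limit is identified with $\cP^{\cC}(\Ar^{\ad}(\cG))$ via the oplax colimit and mates, and the passage to sheaves via Lurie's criterion \cite[Prop.~20.6.3.1]{SAG} is the same as in the paper. The paper makes explicit several points you gloss over: the coherent construction of the diagram with transitions $L\circ\Lan_{(f^*)^{\op}}$ (using that $\Lan_{(f^*)^{\op}}$ preserves $\tau$-local equivalences), the fact that $\Lan_{(f^*)^{\op}}$ preserves sheaves for admissible $f$ (needed for the partially lax limit of sheaves to sit fully inside that of presheaves), and strong monoidality of $\Lan_{(f^*)^{\op}}$.
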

The monoidal structure is the sheafification of the pointwise monoidal structure. We work with symmetric monoidal $\infty$-categories but the arguments are equally valid for presentably $\cO$-monoidal categories for any unital $\infty$-operad $\cO$. Moreover, \cref{thm:main-theorem} generalizes to hypersheaf and module categories, cf.~\cref{thm:main-theorem-hyper} and \cref{rem:modules}.

\medskip \noindent Let us informally explain the functoriality of $\Sh(\cG^{\ad}_{/-};\cC)$ defining the partially lax limit. Let us denote by $\cP^{\cC}(-)$ the $\cC$-valued presheaves. Pullback along a map $f\colon X \to Y$ in $\cG$ yields a morphism of sites $f^*\colon \cG^{\ad}_{/Y}\to \cG^{\ad}_{/X}$. Left Kan extension followed by sheafification yields \begin{center}
    \begin{tikzcd}
        \Sh \left(\cG^{\ad}_{/Y};\cC \right) \arrow[r, hookrightarrow] & \cP^{\cC} \left(\cG_{/Y}^{\ad}\right) \arrow[rr, "\Lan_{(f^*)^{\op}}"] & &  \cP^{\cC} \left(\cG_{/X}^{\ad}\right) \arrow[r, "L_{\tau}"] & \Sh \left(\cG_{/X}^{\ad};\cC \right),
    \end{tikzcd}
\end{center}
We will construct this functor rigorously in \cref{section: presheaf as partially lax lim} and \cref{section: sheafifying} with all the desired coherences using $(\infty,2)$-category theory. 
The functor $\Sh(\cG;\cC)\to \Sh(\cG^{\ad}_{/X};\cC)$ appearing on the $X$-factor of \Cref{mainthm: A} is given by restriction along the forgetful functor $\cG^{\ad}_{/X}\to \cG$.

\medskip \noindent Our \cref{mainthm: A} is obtained by sheafifying the following statement about presheaf categories, denoted $\mdef{\cP^{\cC}(-)}\coloneqq \Fun((-)^{\op},\cC)$.
\begin{mainthm}[{\Cref{thm:presheaves-as-lax-lim}}]\label{mainthm: B}
    Let $\cG^{\ad}\subset \cG$ be a pullback-stable left-cancellative wide subcategory of a small $\infty$-category $\cG$. Let $\cC$ be a cocomplete symmetric monoidal $\infty$-category such that the monoidal structure commutes with colimits in each variable. There is a preferred symmetric monoidal equivalence of $\infty$-categories 
    \[  
        \cP^{\cC}(\cG)\isoo  \parlaxlim_{X\in \cG^{\op}} \cP^{\cC}(\cG^{\ad}_{/X}), \quad \Phi\mapsto\left((\cG^{\ad}_{/X})^{\op}\xrightarrow{\fgt} \cG^{\op} \xrightarrow{\Phi} \cC\right)_{X \in \cG^{\op}}
    \]
    where we mark by the morphisms in $\cG^{\ad}$.
\end{mainthm}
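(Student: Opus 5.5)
The plan is to realize the partially lax limit as an $\infty$-category of sections of a cartesian-type fibration, and then to identify that category of sections with functors out of a single "total" $\infty$-category, which we will show localizes onto $\cG^{\op}$.

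\textbf{Step 1: unstraightening.} The functor $X \mapsto \cP^{\cC}(\cG^{\ad}_{/X}) = \Fun((\cG^{\ad}_{/X})^{\op},\cC)$ has transition maps $\Lan_{(f^*)^{\op}}$ along the pullback functors $f^*$. Before Kan extending, it is the functor $\cG^{\op} \to \Cat$, $X\mapsto (\cG^{\ad}_{/X})^{\op}$, with functoriality given by pullback; this uses that $\cG^{\ad}$ is pullback-stable. Let $p\colon E \to \cG^{\op}$ be its cocartesian unstraightening. Concretely, $E$ has objects the admissible maps $U\to X$, and a morphism $(U\to X)\to(V\to Y)$ consists of $f\colon Y\to X$ together with an admissible map $V\to U\times_X Y$ over $Y$. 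By the standard description of lax limits of functor categories, the lax limit of $X\mapsto\Fun((\cG^{\ad}_{/X})^{\op},\cC)$ with left Kan extension transitions is $\Fun(E,\cC)$, since left Kan extensions are left adjoint to restriction and the mate correspondence converts lax cones accordingly. The partially lax limit is then the full subcategory $\Fun^{\natural}(E,\cC)$ of functors that invert the marked edges. These are the $p$-cocartesian edges lying over admissible maps $f$, i.e.\ the edges $(U\to X)\to(U\times_X Y\to Y)$ with $f$ admissible. I would take this identification as the working model of $\parlaxlim$, citing the relevant result of \cref{section: presheaf as partially lax lim}.

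\textbf{Step 2: localization.} Define $q\colon E\to\cG^{\op}$ by $(U\to X)\mapsto U$. We need to check that a morphism of $E$ goes to $U\leftarrow V$ in $\cG$, i.e.\ to a morphism of $\cG^{\op}$; it does, via the composite $V\to U\times_XY\to U$. The main claim is that $q$ exhibits $\cG^{\op}$ as the localization of $E$ at the marked edges $W$. Then $\Fun^{\natural}(E,\cC)\simeq\Fun(\cG^{\op},\cC)=\cP^{\cC}(\cG)$, with the comparison given by restriction along $q$. On the $X$-component, $q$ restricts to $\fgt$, so this is exactly the formula in \cref{mainthm: B}. To prove the localization claim I would construct a section $s\colon \cG^{\op}\to E$, $U\mapsto(\id\colon U\to U)$. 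Then $q s=\id$, and there is a natural transformation $\id_E \Rightarrow sq$ whose component at $(U\to X)$ is the marked edge $(U\to X)\to(U\to U)$ lying over the admissible map $U\to X$; its $q$-image is $\id_U$. A functor with a section $s$ and a natural transformation $\id\Rightarrow sq$ whose components are all in $W$, and which is sent by $q$ to identities, is a localization at $W$ provided $q$ inverts $W$. This is a standard criterion, the localization analogue of a reflective localization. Finally, $q$ does invert $W$: the marked edge $(U\to X)\to(U\times_XY\to Y)$ with $U\to X$ admissible maps to the map $U\times_X Y\to U$, and this is an equivalence only when $Y\to X$ is such that the base change is trivial. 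This is where I must be careful: the component of $\id\Rightarrow sq$ lies over $U\to X$, which is the admissible \emph{base} map. So the marked edges I need are those over admissible $f$ when the source object is itself $\id_X$-like. I would recheck which edges are marked and rechoose $q$ or $s$ so that the components are cocartesian over admissible maps with target $(\id_U)$; left-cancellativity should ensure that $W$ is closed enough, e.g.\ under 2-out-of-3, so that the criterion applies.

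\textbf{Step 3: monoidality.} The symmetric monoidal structures are pointwise, so restriction along $q$ is symmetric monoidal for the pointwise structure on $\Fun(E,\cC)$. What remains is to check that the monoidal structure on the partially lax limit agrees with the pointwise one. Here the hypothesis that $\otimes$ commutes with colimits in each variable is used: it makes $\Lan$ along $(f^*)^{\op}$ oplax monoidal, so the lax limit acquires a compatible structure, presumably as set up in the paper's operadic framework. The main obstacle is Step 2: verifying the localization criterion coherently, with the correct marking and with left-cancellativity entering precisely there.
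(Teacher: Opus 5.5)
\textbf{There is a genuine gap, located in Step 1.} Your $q$ and $s$ need no changing. Your $E$ is exactly $\Ar^{\ad}(\cG)^{\op}$, $q$ is the source functor, $s$ is $\const^{\op}$, and your transformation $\id_E\Rightarrow sq$ is the counit of the adjunction $\const\dashv s$ that the paper uses.

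What goes wrong is the claim that the marked edges are the $p$-cocartesian edges over admissible maps. A cocartesian edge $(U\to X)\to(U\times_XY\to Y)$ over $f\colon Y\to X$ records the \emph{oplax} structure map $\Psi_X\to\Psi_Y\circ(f^*)^{\op}$ of the restriction diagram. The partially lax condition instead concerns the \emph{lax} structure map $\Lan_{(f^*)^{\op}}\Psi_X\to\Psi_Y$ of the left Kan extension diagram, which is the mate of the former. Invertibility of one does not imply invertibility of the other.

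With your $W$ the statement is in fact false. For $\Phi\in\cP^{\cC}(\cG)$, the functor $\Phi\circ q$ sends your marked edge to $\Phi(U)\to\Phi(U\times_XY)$, which is not invertible in general. For example, take $\cG$ the poset of opens of a space with all inclusions admissible. The cocartesian edges over $\emptyset\to X$ land in the terminal object $(\emptyset\subseteq\emptyset)$ of $E$, which forces $\Fun_W(E,\cC)\simeq\cC$. This is exactly why $q$ fails to invert $W$ in Step 2, and why the components of $\id_E\Rightarrow sq$ are not in your $W$. Rechoosing $q$ or $s$ cannot repair this; $W$ itself has to change.

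\textbf{The missing computation.} For admissible $f\colon Y\to X$ one has $f\circ-\dashv f^*$ between $\cG^{\ad}_{/Y}$ and $\cG^{\ad}_{/X}$. Here left-cancellativity ensures that the unit $(\id_A,a)\colon A\to A\times_XY$ is a morphism of $\cG^{\ad}_{/Y}$; this, not a 2-out-of-3 property of $W$, is where the hypothesis enters. Hence $\Lan_{(f^*)^{\op}}$ is restriction along $(f\circ-)^{\op}$, and the lax structure map at $a\colon A\to Y$ is $\Psi(f\circ a)\to\Psi(a)$.

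So $W$ consists of the edges $(f\circ a\colon A\to X)\to(a\colon A\to Y)$ given by $f$ and $(\id_A,a)$, with $f$ and $a$ admissible. Equivalently, each is a cocartesian edge followed by the unit of $f\circ-\dashv f^*$ in the fibre. With this $W$ your argument goes through:
\begin{itemize}
\item $q$ sends every such edge to $\id_A$.
\item Your component at $(u\colon U\to X)$ is the marked edge with $f=u$ and $a=\id_U$.
\item $q\dashv s$ with $qs=\id$, so $q^*$ is fully faithful with essential image the functors inverting the unit components, which is $\Fun_W(E,\cC)$.
\end{itemize}
This is the paper's proof.

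\textbf{A separate gap in Step 3.} Oplax monoidality of $\Lan_{(f^*)^{\op}}$ is not enough; it is automatic anyway for a left adjoint of a symmetric monoidal functor. You need $\Lan_{(f^*)^{\op}}$ to be \emph{strong} symmetric monoidal. This is what lets the Kan extension diagram land in symmetric monoidal categories, so that its lax limit carries the pointwise structure, and what makes the mate equivalence symmetric monoidal. The paper proves strongness by identifying the pointwise structure with Day convolution, and this is where the hypothesis that $\otimes$ commutes with colimits is used.
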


\subsection*{Fractured Topoi and Partially Lax Limits} \label{subsec: intro recollections} Let us now informally recall some of the terminology used in \cref{mainthm: A}. We will give a more rigorous recollection in \cref{section: recollection-geometric-sites} and \cref{section: recollection-partially-lax-limits}.

\medskip \noindent Lurie suggested a formalization of the philosophy of gros and petit topoi via so-called \emph{fractured $\infty$-topoi} \cite[Section 20]{SAG}, which roughly is a subcategory $\cX^{\corp}$ of an $\infty$-topos $\cX$ satisfying various compatibilities. While sheaves on sites present $\infty$-topoi, the analogous notion to present fractured $\infty$-topoi is through \emph{geometric sites} -- meaning a site $(\cG, \tau)$ together with a wide subcategory $\cG^{\ad}$ of so-called \emph{admissible morphisms} that is compatible with the topology $\tau$ in a suitable way (\cref{def: geometric site}). By \cite[Theorem 20.6.3.4]{SAG}, the subcategory $\Sh(\cG^{\ad}) \subseteq \Sh(\cG)$ yields a fractured $\infty$-topos, where $\Sh(\cG)$ plays the role of the gros topos and $\{\Sh(\cG^{\ad}_{/X}) \}_{X \in \cG}$ plays the role of the petit topoi. Thus, our main theorem describes the fractured $\infty$-topos $\Sh(\cG)$ as a partially lax limit over the respective petit topoi.

\medskip \noindent Partially lax limits, introduced by Gepner--Haugseng--Nikolaus \cite{GHN2017}, formalize the sense in which one takes a compatible family of objects in the petit topoi. Specializing the rigorous definition from \cref{section: recollection-partially-lax-limits} to our \cref{mainthm: A} describes a sheaf $\cF \in \Sh(\cG;\cC)$ equivalently by the data:
\begin{itemize}
    \item a sheaf $\cF_X \in \Sh \left(\cG^{\ad}_{/X};\cC \right)$ for each $X \in \cG$,
    \item a family of compatible morphisms $f_{\alpha} \colon \alpha^* \cF_Y \to \cF_X$ for every $\alpha \colon X \to Y$ in $\cG$, where $f_{\alpha}$ is an equivalence if $\alpha$ is admissible.
\end{itemize}
Moreover, this equivalence is compatible with the pointwise symmetric monoidal structure on the partially lax limit (\cref{remark: symmetric monoidal lax limits}).

\subsection*{Simplification of the Partially Lax Limit on Certain Functors}
It turns out that there are a number of functors which send partially lax limits to limits -- we will recall some in \Cref{subsec:functors-that-send-partially-lax-limits-to-limits}. We apply this in the situation of \Cref{mainthm: A} and gain extra mileage when $\cG$ admits a terminal object $\ast$ so that $\lim_{\cG^{\op}}$ is computed by evaluation at $\ast$. In this case, the restriction functor 
\begin{align}
    \Sh(\cG;\cC)\too \Sh \left(\cG^{\ad}_{/\ast}; \cC \right) \label{eq:restriction-to-terminal-petit-site}
\end{align}
induces an equivalence on
\begin{enumerate}
    \item the full subcategories of dualizable objects,
    \item Picard spectra, Picard spaces and Picard groups, and
    \item rigidifications in the sense of Gaitsgory--Rozenblyum \cite{GS17, HSSS21, ramzi2026locallyrigidinftycategories},
\end{enumerate}
see \cref{corollary: R of Sh}. While these applications admit a more elementary proof via the left adjoint of \cref{eq:restriction-to-terminal-petit-site}, the partially lax limit perspective motivated us to expect these results in the first place.

\subsection*{Condensed Objects as a Partially Lax Limit}
We want to single out an example of fractured $\infty$-topoi from condensed mathematics, while more classical examples are recalled in \Cref{section: recollection-geometric-sites}.

\medskip \noindent Fix an uncountable strong limit cardinal $\kappa$. Let $\ExtrDisc$ denote the $1$-category of $\kappa$-small extremally disconnected compact Hausdorff spaces, i.e.~those $\kappa$-small compact Hausdorff spaces in which the closure of an open set is open again.\footnote{Let us recall that every extremally disconnected compact Hausdorff space is a profinite set.} Equivalently, these are the projective objects in the category of compact Hausdorff spaces $\CHaus$ by a theorem of Gleason \cite[Theorem 2.5]{gleason58}. We endow it with the Grothendieck topology of finitely jointly surjective families of maps. Let $\cC$ be a cocomplete $\infty$-category, then Clausen--Scholze resp.~Barwick--Haine \cite{barwick2019pyknoticobjectsibasic, scholze2026lecturescondensedmathematics, scholze2026lecturesanalyticgeometry} introduced the \emph{condensed objects valued in $\cC$} as the $\infty$-category of $\cC$-valued sheaves on $\ExtrDisc$, i.e.
\[ \Cond(\cC) \coloneqq \Sh(\ExtrDisc; \cC) \simeq \Fun^{\times}(\ExtrDisc^{\op};\cC). \] In work of the second-named author with Rasekh, we identified $\Cond(\An)$ as a fractured $\infty$-topos presented by a geometric site and computed the associated petit topoi \cite[Theorems A and B]{rasekhzhu2026fracturedstructurescondensedmathematics}. Combining this with \cref{mainthm: A} we deduce:

\begin{mainthm}[\cref{corollary: Cond as lax limit}]
    Let $\cC$ be a presentably symmetric monoidal $\infty$-category. Then, there is an equivalence of symmetric monoidal $\infty$-categories
    \[ \Cond(\cC) \simeq \parlaxlim_{X \in \ExtrDisc^{\op}} \Sh(X;\cC), \]
    where the open embeddings are marked.
\end{mainthm}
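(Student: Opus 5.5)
The plan is to deduce the statement from \cref{mainthm: A}, applied to the geometric site on $\ExtrDisc$ constructed in \cite{rasekhzhu2026fracturedstructurescondensedmathematics}. Its admissible morphisms are the open embeddings and its topology is that of finite jointly surjective families. Granting \cite[Theorems A and B]{rasekhzhu2026fracturedstructurescondensedmathematics}, $(\ExtrDisc, \ExtrDisc^{\ad}, \tau)$ is a geometric site in the sense of \cref{def: geometric site}, where $\ExtrDisc^{\ad}$ is the wide subcategory of open embeddings. Moreover, $\Sh(\ExtrDisc;\cC)=\Cond(\cC)$ by definition. \Cref{mainthm: A} then directly gives a symmetric monoidal equivalence
\[ \Cond(\cC)\isoo \parlaxlim_{X\in\ExtrDisc^{\op}} \Sh\left(\ExtrDisc^{\ad}_{/X};\cC\right), \]
marked at the open embeddings. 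The one thing to check is that the topology used there agrees with the covering families above, i.e.\ that the Rasekh--Zhu site has exactly these coverings. If their coverings are instead generated by finite disjoint unions, the sheaf categories still coincide, since they are the finite-product-preserving presheaves in both cases.

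The remaining step is to identify each petit term $\Sh(\ExtrDisc^{\ad}_{/X};\cC)$ with $\Sh(X;\cC)$, naturally in $X\in\ExtrDisc^{\op}$. The category $\ExtrDisc^{\ad}_{/X}$ is the poset of open subsets $U\subseteq X$ that are themselves extremally disconnected compact Hausdorff, equipped with the induced covering families. Such a $U$ is compact, hence clopen. Conversely, every clopen subset of an extremally disconnected space is again extremally disconnected. So $\ExtrDisc^{\ad}_{/X}$ is the poset of clopens of $X$ with finite covers. Since $X$ is profinite, clopens form a basis of compact opens closed under finite intersections. The comparison lemma then identifies sheaves on this basis, with finite covers, with $\Sh(X;\cC)$. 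Taking $\cC$-valued sheaves as $\cC$-valued limit-preserving functors out of the $\An$-valued topoi, the $\cC$-valued case follows from the $\An$-valued one via $\Sh(-;\cC)\simeq \Sh(-)\otimes\cC$ for presentable $\cC$. This is presumably the content of the computation of petit topoi in \cite[Theorem B]{rasekhzhu2026fracturedstructurescondensedmathematics}.

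The main obstacle is naturality: these equivalences must assemble into an equivalence of the functors $\ExtrDisc^{\op}\to\Catl$, compatibly with the symmetric monoidal structures, so that the partially lax limits agree. For $f\colon X\to Y$, the transition functor in \cref{mainthm: A} is sheafified left Kan extension along $f^{*}=f^{-1}$ on clopens. Under the comparison lemma this corresponds to the usual pullback $f^{*}\colon\Sh(Y)\to\Sh(X)$, because $f^{-1}$ is a morphism of sites. I would establish this naturality at the level of functors of sites: the inclusion of clopen bases $\mathrm{Clop}(X)\into \mathrm{Open}(X)$ is natural in $X$ for preimages. Hence the comparison-lemma equivalences arise from a natural transformation of the functors $X\mapsto$ (site) into the $(\infty,2)$-categorical construction of \cref{section: sheafifying}, and they are pointwise equivalences. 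Symmetric monoidality is automatic, since both sides carry the sheafified pointwise structure and the transition functors are symmetric monoidal left adjoints. A pointwise natural equivalence of diagrams induces an equivalence of partially lax limits with the same marking, which concludes the proof.
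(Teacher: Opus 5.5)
Your proposal is correct and follows the paper's route: apply \cref{mainthm: A} to the Rasekh--Zhu geometric site $(\ExtrDisc,\ExtrDisc^{\open},\tau)$, then identify each petit term $\Sh(\ExtrDisc^{\open}_{/X};\cC)$ with $\Sh(X;\cC)$. The paper simply cites \cite[Theorem B]{rasekhzhu2026fracturedstructurescondensedmathematics} for that identification and leaves naturality in $X$ and monoidal compatibility implicit, whereas your clopen-basis/comparison-lemma argument and naturality discussion add detail without changing the proof.
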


\noindent The appeal of Theorem~C is that it identifies  $\Cond(\Sp)$ as a partially lax limit of \enquote{extremely\footnote{...maybe we should say extremally!} nice} categories. Indeed, for any $X\in \ExtrDisc$ the symmetric monoidal category $\Sh(X;\Sp)$ is compactly generated \cite[Proposition 3.1]{harr25} and rigid \cite[Proposition 2.20]{HSSS21}, i.e.~dualizable, objects of $\Sh(X;\Sp)$ are precisely the compact objects. Moreover, equivalences in $\Sh(X;\Sp)$ are detected on the points of $X$.

\subsection*{Gros Étale Exodromy}

Van Dobben de Bruyn, building on work of Barwick--Glasman--Haine and Wolf \cite{barwick2020exodromy, wolf2022}, proves an exodromy correspondence for Postnikov complete étale sheaves: This is a way to reconstruct sheaves on a scheme as continuous functors (see \cref{section: gros exodromy}) from a so-called Galois category \cite[Theorem 2]{debruyn2026condensedproofproetaleetale}. 

\medskip \noindent Here, the \emph{condensed Galois $\infty$-category} of $X$ from \cite{barwick2020exodromy, haine2025condensedhomotopytypescheme, mair2026galoiscategoriescondensedcontractible, debruyn2026condensedproofproetaleetale} is
\[ \Gal(X) \colon \ExtrDisc^{\op} \longrightarrow \Cat_{\infty}, \ S \mapsto \Fun^*_{\mathrm{loc} \, \mathrm{coh}}(\Sh(X_{\et}), \Sh(S)), \]
which for $S \in \ExtrDisc^{\op}$ consists of locally coherent geometric morphisms $\Sh(S) \to \Sh(X_{\et})$, see \cite[2.1.6]{debruyn2026condensedproofproetaleetale}. This is functorial in $X$ and our theorem concerns the gros version
$$ \GAL \colon  \ExtrDisc^{\op} \longrightarrow \Cat_{\infty}, \qquad S\mapsto  \laxcolimdag_{X \in \Sch} \Gal(X)(S),$$
where we mark the étale maps and run over $\kappa$-small qcqs schemes, see \cref{def: gros Galois category}. Moreover, let $\ul{\cC} = \Sh(-;\cC) \colon \ExtrDisc^{\op} \to \Cat_{\infty}$ be the condensed $\infty$-category associated to $\cC$. Then, we prove a gros version of the étale exodromy theorem:

\begin{mainthm}[\cref{theorem: gros etale exodromy}]
    Let $\cC$ be a compactly assembled presentable $\infty$-category. Then, there is a preferred étale exodromy equivalence
    \[ \Sh^{\post}_{\et}(\Sch; \cC) \xrightarrow{\ \simeq \ } \Fun^{\cts}(\GAL, \ul{\cC}) \]
    of $\infty$-categories.
\end{mainthm}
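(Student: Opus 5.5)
We prove Theorem D by combining \cref{mainthm: A} with the single-scheme exodromy equivalence of van Dobben de Bruyn, and then passing the partially lax limit through $\Fun^{\cts}(-,\ul{\cC})$.

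\textbf{Step 1: the gros étale site.} First we present $\Sch$ with the étale topology and étale maps as admissibles as a geometric site. We then apply (the hypersheaf/Postnikov variant of) \cref{mainthm: A}, without monoidal structure, to obtain
\[ \Sh^{\post}_{\et}(\Sch;\cC) \simeq \parlaxlim_{X\in \Sch^{\op}} \Sh^{\post}\left(X_{\et};\cC\right), \]
where the étale maps are marked. Here $\Sch^{\ad}_{/X}$ is the small étale site of $X$. The Postnikov-completeness condition has to be checked to be compatible with restriction: I expect Postnikov completeness of a gros sheaf to be detected on each petit site, so that the equivalence of \cref{mainthm: A} restricts to the full subcategories.

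\textbf{Step 2: single-scheme exodromy.} Next we apply van Dobben de Bruyn's theorem for each qcqs $X$:
\[ \Sh^{\post}(X_{\et};\cC)\simeq \Fun^{\cts}(\Gal(X),\ul{\cC}). \]
This equivalence must be natural in $X$ for the lax functoriality. For $f\colon X\to Y$, the pullback $f^*$ on petit sheaves must correspond to precomposition with $\Gal(f)\colon \Gal(X)\to\Gal(Y)$. This is plausible because exodromy is built from pulling back along geometric morphisms $\Sh(S)\to\Sh(X_{\et})$.

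Promoting this to a natural equivalence of functors $\Sch^{\op}\to\Cat_\infty$ is the main obstacle. The functoriality in \cref{section: sheafifying} is defined via left Kan extension followed by sheafification, and it lives in an $(\infty,2)$-categorical setting. I would try first to identify both sides as restrictions of a single functor on $\Sch^{\op}$. On one side, $X\mapsto \Sh(X_{\et})$ is given by pullback, which \cref{section: sheafifying} should match with $L_\tau\circ\Lan$. On the other, $\Fun^{\cts}(\Gal(-),\ul{\cC})$ is obtained by postcomposing $\Gal$ with a functor $\Fun^{\cts}(-,\ul{\cC})$ that is contravariant in condensed categories. Naturality then reduces to naturality of the unit map $\Sh(X_{\et};\cC)\to\Fun^{\cts}(\Gal(X),\ul{\cC})$, which is given by evaluation at points $\Sh(S)\to\Sh(X_{\et})$.

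\textbf{Step 3: conversion to a colimit.} Finally, $\Fun^{\cts}(-,\ul{\cC})$ is a mapping functor out of condensed $\infty$-categories. It therefore sends partially lax colimits to partially lax limits, with the étale-marked morphisms going to equivalences. This is one of the functors recalled in \Cref{subsec:functors-that-send-partially-lax-limits-to-limits}, via the universal property of $\laxcolimdag$ as a mapping-out object in the $(\infty,2)$-category of condensed categories, computed levelwise in $S$. Hence
\[ \parlaxlim_{X}\Fun^{\cts}(\Gal(X),\ul{\cC})\simeq \Fun^{\cts}\Big(\laxcolimdag_X \Gal(X),\ul{\cC}\Big)=\Fun^{\cts}(\GAL,\ul{\cC}). \]
Composing the equivalences from Steps 1–3 yields the theorem.

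One thing must be checked here: the partially lax colimit defining $\GAL$ is formed objectwise in $S$. It must agree with the one in condensed categories, or at least the continuity condition must be preserved. This needs the sheaf condition on $\ExtrDisc$, namely finite products, to commute with the partially lax colimit, which may require sifted-colimit or filteredness arguments.
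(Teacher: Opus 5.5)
\textbf{Overall.} Your architecture matches the paper's: apply \cref{mainthm: A} to qcqs schemes with étale admissibles, replace each petit term by $\Fun^{\cts}(\Gal(X),\ul{\cC})$ via van Dobben de Bruyn's theorem, then pull the partially lax limit inside $\Fun^{\cts}(-,\ul{\cC})$. The paper also only asserts naturality of $\mathrm{Ex}^{\et}$ in $X$, by a remark at roughly the level of your Step 2.

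\textbf{The gap is in Step 1.} The paper proves \cref{mainthm: A} only for sheaves and hypersheaves; there is no Postnikov-complete version. Moreover, for a general compactly assembled $\cC$ you never say what a Postnikov complete $\cC$-valued gros sheaf is; in de Bruyn's framework the $\cC$-valued case is obtained from the $\An$-valued one by tensoring. So \enquote{I expect Postnikov completeness to be detected on each petit site} carries real weight.

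The paper sidesteps this. It first treats coefficients $\An_{\le n}$, where every sheaf is automatically Postnikov complete. There \cref{mainthm: A} applies verbatim, and the petit statement reads $\Sh(X_{\et};\An_{\le n})\simeq\Fun^{\cts}(\Gal(X),\ul{\An}_{\le n})$. Only afterwards does it pass to $\lim_n$ and tensor with $\cC$, as in \cite[Theorem 3.2.6]{debruyn2026condensedproofproetaleetale}.

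If you want to keep your order of steps, your expectation can be proved for $\cC=\An$. Each restriction $\Sh(\Sch_{\et})\to\Sh(X_{\et})$ is the inverse image of a geometric morphism and also preserves limits, since limits of sheaves are pointwise. Hence it commutes with truncations and with the limit of a Postnikov tower, and these restrictions are jointly conservative (evaluate at $\id_X$). You would still need to get from $\An$ to general $\cC$. That is exactly the paper's \enquote{pass to the limit and tensor with $\cC$} step. It uses that partially lax limits are limits (\cref{remark: berman theorem}), hence commute with $\lim_n$ and with $-\otimes\cC$ for dualizable $\cC$.

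\textbf{Step 3.} It is correct in substance, but it is not an instance of \Cref{subsec:functors-that-send-partially-lax-limits-to-limits}, which concerns lax-invariant invariants of $\CAlg(\PrL)$. The paper instead computes directly with the end and coend formulas of \cref{remark: berman theorem}. It writes the partially lax limit as an end over $\Tw(\Sch^{\op})^{\op}$ and expands $\Fun^{\cts}=\Nat$ as an end over $\Tw(\ExtrDisc^{\op})^{\op}$. It then interchanges the two limits and recognises the pointwise coend formula for $\laxcolimdag$.

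Your closing worry is moot. $\Fun^{\cts}(-,\ul{\cC})$ is by definition $\Nat(-,\ul{\cC})$ on arbitrary functors $\ExtrDisc^{\op}\to\Cat_{\infty}$, and $\GAL$ is by definition the pointwise partially lax colimit. So no sheaf condition on $\GAL$ and no sifted-colimit argument is needed; the paper explicitly leaves open whether $\GAL$ is a sheaf.
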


\noindent The same result holds over more general base schemes $T$, see \cref{remark: exodromy base scheme}.

\subsection*{Conventions} We will freely use the language of $(\infty,2)$-categories, for which we recommend \cite[Appendix A]{blansblom} as a reference.

\subsection*{AI Disclosure} We have used LLMs to search the literature, help proofread and improve parts of the language. The mathematical content is entirely our own.

\subsection*{Acknowledgements} We thank Thomas Blom and Phil Pützstück for insightful conversations about partially lax limits, Remy van Dobben de Bruyn for answering questions about exodromy and Markus Hausmann for giving us feedback on a draft. FN is supported by 
the European Research Council (ERC) under Horizon Europe (Starting Grant 
BorSym, ID:~101163408) and furthermore thanks the Max Planck Institute for Mathematics in Bonn for its hospitality. QZ is grateful to the Max Planck Institute for Mathematics in Bonn for its hospitality and financial support.
\newpage

\section{Recollection on Geometric Sites}\label{section: recollection-geometric-sites}
Lurie's fractured $\infty$-topoi \cite[Section 20]{SAG} are usually presented by a geometric site. For the purpose of this article, we will only need the notion of geometric sites. Let us recall Lurie's definition.

\begin{definition}[{\cite[Definition 20.2.1.1, 20.6.2.1]{SAG}}] \label{def: geometric site}
Let $\cG$ be a small $\infty$-category.
\begin{enumerate}
    \item A \tdef{class of admissible morphisms} is a class of morphisms in $\cG$ that is a full and replete subcategory $\mdef{\Ar^{\ad}(\cG)} \subseteq \Ar(\cG)$ of the arrow category such that:
    \begin{enumerate}[label=\alph*)]
    \item For any object $X\in \cG$ the identity morphism $\id_X$ is admissible.
    \item For any admissible morphism $a \colon A\to B$ and any morphism $f \colon X\to B$ in $\cG$ the pullback $a'\colon A\times_B X\to X$ exists and is admissible.
    \item Let $f,g$ be composable morphisms in $\cG$. If $g$ is admissible, then $g\circ f$ is admissible if and only if $f$ is admissible. 
    \item \label{item:retract-of-admissilbe} The subcategory $\Ar^{\ad}(\cG) \subseteq \Ar(\cG)$ is closed under retracts.
    \end{enumerate} 
    We denote by $\mdef{\cG^{\ad}} \subset \cG$ the wide subcategory spanned by the admissibles.
    \item A \tdef{geometric site} is a site $(\cG, \tau)$ together with a class of admissible morphisms such that any $\tau$-cover admits a $\tau$-subcover generated by a set of admissible morphisms.
\end{enumerate}
\end{definition}

\begin{remark}\label{rmk:retract-unneccesary}
    In this article we will \emph{never} need condition \ref{item:retract-of-admissilbe}. It is only relevant to obtain the full strength of Lurie's fractured $\infty$-topoi from \cite[Section 20]{SAG}, which will play no further role in this article.
\end{remark}

\noindent One can pull back the topology $\tau$ to $\tau^{\ad}$ on $\cG^{\ad}$ by \cite[Proposition 20.6.1.1]{SAG}. Then, $(\Sh(\cG), \Sh(\cG^{\ad}))$ forms a fractured $\infty$-topos in the sense of Lurie \cite[Theorem 20.6.3.4]{SAG}. This formalizes the notion of gros and petit topoi by letting $\Sh(\cG)$ take the role of a gros topos and the slices $\Sh(\cG^{\ad}_{/X})$ for $X \in \cG$ take the role of petit topoi.

\medskip \noindent To deal with size issues, fix an uncountable strong limit cardinal $\kappa$.
  
\begin{example} \label{example: alggeo geometric site}
    The $\infty$-category of qcqs $\kappa$-small\footnote{In the sense of \cite[Definition 1.1.7]{debruyn2026condensedproofproetaleetale}. The precise definition won't be important to us, but a size bound will be needed in the gros exodromy theorem (\cref{theorem: gros etale exodromy}), since \cite[Theorem 3.2.6]{debruyn2026condensedproofproetaleetale} requires such a size condition.} schemes or spectral schemes with the étale/Zariski topology together with the étale morphisms/open immersions forms a geometric site \cite[20.6.4]{SAG}.
\end{example}

\begin{example} \label{example: geometry geometric site}
    Consider the (1-)category $\Mfld^r$ of $r$-times differentiable (second countable, Hausdorff) $\kappa$-small manifolds with $r$-times differentiable maps together with a topology given by jointly surjective open embeddings. Together with the $r$-times differentiable open embeddings $\Mfld_{\text{ét}}^r$, we obtain a geometric site \cite[Lemma 2.4]{clough25}. Let $\cC$ be an $\infty$-category. The slices can be computed as $\Sh(\Mfld_{\text{ét}/M}^r;\cC) \simeq \Sh(M;\cC)$ for $M \in \Mfld^r$, see \cite[Remark 2.6]{clough25}.
\end{example}
  
\begin{example} \label{example: condensed geometric site}
    Let $\ExtrDisc$ denote the (1-)category of $\kappa$-small\footnote{Alternatively, one may take the pyknotic perspective from \cite{barwick2019pyknoticobjectsibasic} to deal with the set theory.} extremally disconnected compact Hausdorff spaces, i.e.~those  $\kappa$-small compact Hausdorff spaces in which the closure of an open set is open again. Consider a Grothendieck topology $\tau$ given by the finitely jointly surjective families of maps. Let $\cC$ be an $\infty$-category. Then, Clausen--Scholze's $\infty$-category of \tdef{condensed objects with values in $\cC$} is
    \[ \mdef{\Cond(\cC)} \coloneqq \Sh(\ExtrDisc; \cC) \simeq \Fun^{\times}(\ExtrDisc^{\op},\cC), \]
    see \cite[Definition 11.7]{scholze2026lecturesanalyticgeometry}. For example, a \tdef{condensed $\infty$-category} is a condensed object valued in $\Cat_{\infty}$.
    
    \medskip \noindent Let $\ExtrDisc^{\open}$ denote the wide subcategory of open embeddings. Then, $(\ExtrDisc, \ExtrDisc^{\open}, \tau)$ defines a geometric site by \cite[Theorem A]{rasekhzhu2026fracturedstructurescondensedmathematics}. Furthermore, \cite[Theorem B]{rasekhzhu2026fracturedstructurescondensedmathematics} computes the slices as $\Sh(\ExtrDisc^{\open}_{/S};\cC) \simeq \Sh(S;\cC)$ for $S \in \ExtrDisc$.
    
    \medskip \noindent The main result (\cref{mainthm: A}) of this article relates $\Cond(\cC)$ and $\Sh(S;\cC)$ via partially lax limits.
\end{example}

\section{On Partially Lax Limits}\label{section: recollection-partially-lax-limits}
\subsection{Recollection} 
Let us now give a formal version of the informal recollection of partially lax limits from \Cref{subsec: intro recollections}. We recommend \cite[Section 2]{GLP} for further background. We denote by $\Cat_{\infty}$ the $\infty$-category\footnote{To apply the lax natural transformation machine, we view $\Cat_{\infty}$ as an $(\infty,2)$-category via the self-enrichment coming from the cartesian closedness.} of small $\infty$-categories, but the analogous constructions hold for large $\infty$-categories by passing to a larger universe.

\medskip \noindent Let $F\colon \cI\to \Cat_{\infty}$ be a functor and let $\ast$ denote the terminal $\infty$-category. Informally, a point in its lax limit consists of functors $F_i\colon *\to F(i)$ for every $i\in\cI$, together with natural transformations
\[
    \begin{tikzcd}[column sep=large, row sep=large]
        * \arrow[r, "\cF_i"] \arrow[dr, "\cF_j"', ""{name=cone, above}]
        & F(i) \arrow[d, "F(\alpha)"] \\
        & F(j)
        \arrow[Rightarrow, from=1-2, to=cone,
            "\eta_{\alpha}" description,
            shorten <=8pt, shorten >=3pt]
    \end{tikzcd}
    \qquad
    \eta_{\alpha}\colon F(\alpha)\circ F_i\Longrightarrow F_j
\]
for every morphism $\alpha\colon i\to j$ in $\cI$, satisfying certain coherences. Equivalently, $\cF_i$ is an object of $F(i)$ and $\eta_{\alpha}\colon F(\alpha)(\cF_i)\to \cF_j$ gives its transition morphism in $F(j)$. These data form a lax cone over $F$, i.e.~a lax natural transformation $\Delta(\pt)\xRightarrow{\text{lax}}F$ from the constant functor $\Delta(\pt)$ at the terminal category $\pt$ to $F$. Reversing the natural transformations gives an oplax cone.

\begin{definition}\label{defn:lax-limit}
    The \tdef{lax limit} and \tdef{oplax limit} of $F\colon\cI\to\Cat_{\infty}$ are the $\infty$-categories of lax resp.~oplax cones:
    \[
        \mdef{\laxlim_{\cI}F}\coloneqq\Nat^{\lax}(\Delta(*),F)
        \quad\text{and}\quad
        \mdef{\oplaxlim_{\cI}F}\coloneqq\Nat^{\oplax}(\Delta(*),F),
    \]
    where $\Delta(*)$ denotes the constant functor with value the terminal $\infty$-category. Similarly, the \tdef{(op-)lax colimits} are given by \tdef{(op-)lax cocones}.
\end{definition}

\noindent By straightening/unstraightening for lax natural transformations, these descriptions identify with categories of sections, see \cite[Remark 2.7]{GLP}. In particular,
\[
    \laxlim_{\cI}F\simeq\Fun_{/\cI}(\cI,\Un^{\co}F),
\]
where $\Un^{\co}F\to\cI$ is the coCartesian unstraightening of $F$. Under this equivalence, $\eta_{\alpha}$ is an equivalence precisely when the corresponding section sends $\alpha$ to a coCartesian edge. Similarly, the oplax limit is the $\infty$-category of sections of the Cartesian unstraightening $\Un^{\ct}F\to\cI^{\op}$.

\begin{definition}
    A \tdef{marked $\infty$-category} is a pair $\cI^{\dagger}=(\cI,\cW)$ consisting of an $\infty$-category $\cI$ together with a replete subcategory $\cW\subseteq\cI$ containing all objects. For two marked $\infty$-categories $\cC,\cD$, we denote by $\mdef{\Fun^{\dagger}(\cC,\cD)}\subseteq\Fun(\cC,\cD)$ the full subcategory of functors preserving marked edges.

    \medskip \noindent The \tdef{partially (op-)lax limits} are the full subcategories
    \[
        \mdef{\parlaxlim_{\cI^{\dagger}}F}\subseteq\laxlim_{\cI}F
        \quad\text{and}\quad
        \mdef{\oplaxlimdag_{\cI^{\dagger}}F}\subseteq\oplaxlim_{\cI}F
    \]
    spanned by those (op-)lax cones whose structure transformations $\eta_{\alpha}$ are equivalences for all $\alpha\in\cW$, i.e.~whose restriction to $\cW$ is strictly natural. Similarly, the \tdef{partially (op-)lax colimit} is characterized by the universal (op-)lax cocone whose restriction to $\cW$ is strictly natural.
\end{definition}

\begin{example}\label{exmp:limit-in-lax-limit}
    The limit of $F$ is the full subcategory
    \[
        \lim_{\cI}F\;\subseteq\;\parlaxlim_{\cI^{\dagger}}F
        \;\subseteq\;\laxlim_{\cI}F
    \]
    spanned by those lax cones for which $\eta_{\alpha}$ is an equivalence for every morphism $\alpha$ in $\cI$. Equivalently, it consists of the coCartesian sections of $\Un^{\co}F\to\cI$. Thus, marking all morphisms recovers the limit, while marking only equivalences recovers the lax limit. Partially lax limits interpolate between these two constructions.
\end{example}

\begin{remark}\label{remark: berman theorem}
    Let $\cI^{\dagger}$ be a marked $\infty$-category and $F\colon\cI\to\Cat_{\infty}$ a functor. We write $|-|$ for the localization at the marked arrows and give the slice categories the markings induced by their forgetful functors to $\cI$. The end and coend formulas of \cite[Definition 3.3 and Remark 3.4]{berman2020laxlimitsinfinitycategories} with the conventions of \cite{LNP, GLP} give
    \begin{align*}
        \parlaxlim_{\cI^{\dagger}}F
        &\simeq\lim_{(i\to j)\in\Tw(\cI)^{\op}}
        \Fun\left(|\cI^{\dagger}_{/i}|,F(j)\right),\\
        \laxcolimdag_{\cI^{\dagger}}F
        &\simeq \underset{(i\to j)\in\Tw(\cI)}{\colim}\,
        |(\cI^{\op})^{\dagger}_{/j}|\times F(i).
    \end{align*}
    Here we use the twisted arrow category with projection $\Tw(\cI)\to\cI\times\cI^{\op}$, $(i\to j)\mapsto(i,j)$. The formulas for the op-versions are obtained by replacing the localized slice categories by their opposites.

    \medskip \noindent We mark $\Un^{\co}F\to\cI$ and $\Un^{\ct}F\to\cI^{\op}$ by the coCartesian resp.~Cartesian lifts of marked edges. By \cite[Theorem 4.4]{berman2020laxlimitsinfinitycategories},
    \begin{alignat*}{3}
    \laxcolimdag_{\cI^{\dagger}} F
    &\simeq \left| \Un^{\ct} F \right|
    &\qquad \text{and} \qquad &&
    \oplaxcolimdag_{\cI^{\dagger}} F
    &\simeq \left| \Un^{\co} F \right|
    \\
    \laxlimdag_{\cI^{\dagger}} F
    &\simeq \Fun_{/\cI^{\dagger}}^{\dagger}
        \left(\cI^{\dagger}, \Un^{\co} F\right)
    &\qquad \text{and} \qquad &&
    \oplaxlimdag_{\cI^{\dagger}} F
    &\simeq \Fun_{/\cI^{\dagger \op}}^{\dagger}
        \left(\cI^{\dagger \op}, \Un^{\ct} F\right).
    \end{alignat*}
    In particular, the partially lax limit consists of those sections that send marked edges to coCartesian edges, also explaining \cref{exmp:limit-in-lax-limit}.
\end{remark}

\begin{example}
    Let $(\cG,\cG^{\ad})$ be an admissibility structure. Then, $(\cG,\cG^{\ad})$ is a marked $\infty$-category. Our \Cref{mainthm: A} concerns a partially lax limit over its opposite.
\end{example}

\begin{remark}[{\cite[Proposition 2.9]{GLP}}]\label{remark: symmetric monoidal lax limits}
    Let $\CMon(\Cat_{\infty})$ denote the $\infty$-category of symmetric monoidal $\infty$-categories and symmetric monoidal functors. Given a functor $F \colon \cI^{\dagger} \to \CMon(\Cat_{\infty})$ its partially lax limit $\parlaxlim_{\cI^{\dagger}}F$ admits a preferred symmetric monoidal structure, computed pointwise. It is characterized by the natural equivalence
    \[
        \Fun^{\otimes\text{-}\lax}
        \left(\cC,\parlaxlim_{\cI^{\dagger}}F\right)
        \simeq
        \parlaxlim_{\cI^{\dagger}}
        \Fun^{\otimes\text{-}\lax}(\cC,F(-))
    \]
    for every symmetric monoidal $\infty$-category $\cC$, where $\Fun^{\otimes\text{-}\lax}$ denotes the $\infty$-category of lax symmetric monoidal functors and monoidal natural transformations. Thus, a lax symmetric monoidal functor into the partially lax limit is the same datum as a partially lax cone of lax symmetric monoidal functors. It is symmetric monoidal if and only if all its components are symmetric monoidal. The same statements hold for (partially) oplax limits.
\end{remark}

\noindent We end this recollection with the following symmetric monoidal version of \cite[Proposition 2.21]{GLP}.
\begin{lemma}\label{prop:oplax-vs-lax-lim-for-adjoints-diagram}
    Let $G\colon\cI\to\Cat_{\infty}$ be a diagram of right adjoints and let $F\colon\cI^{\op}\to\Cat_{\infty}$ be its associated diagram of left adjoints. Then, there exists an equivalence
    \[
        \oplaxlim_{\cI}G\simeq\laxlim_{\cI^{\op}}F.
    \]
    If, moreover, $G$ is a diagram of symmetric monoidal right adjoints and the induced oplax symmetric monoidal structures on the left adjoints are strong, then this equivalence is symmetric monoidal for the preferred symmetric monoidal structures.
\end{lemma}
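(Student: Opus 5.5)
The unmonoidal equivalence is \cite[Proposition 2.21]{GLP}, so I will take it as given and only construct the symmetric monoidal refinement. I will work throughout with the section descriptions from \cref{remark: berman theorem}. Let $p\colon \Un^{\ct}G\to\cI^{\op}$ be the Cartesian unstraightening of $G$. Since every $G(\alpha)$ is a right adjoint, $p$ is also a coCartesian fibration, and its straightening is exactly $F$; this is the mechanism behind the equivalence in \cite{GLP}. Hence $\Un^{\ct}G\simeq \Un^{\co}F$ over $\cI^{\op}$, and both
\[\oplaxlim_{\cI}G\simeq \Fun_{/\cI^{\op}}(\cI^{\op},\Un^{\ct}G)\quad\text{and}\quad \laxlim_{\cI^{\op}}F\simeq\Fun_{/\cI^{\op}}(\cI^{\op},\Un^{\co}F)\]
are categories of sections of one and the same bicartesian fibration.

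Next I put the symmetric monoidal structures in. Because $G$ factors through $\CMon(\Cat_\infty)$, the fibration $p$ promotes to a fibration of symmetric monoidal $\infty$-categories over $\cI^{\op}$, namely a cocartesian family of $\infty$-operads in the sense of Lurie. Its tensor products are computed fibrewise, and the Cartesian transport functors $G(\alpha)$ are symmetric monoidal. The coCartesian transports are the left adjoints $F(\alpha)$, which by doctrinal adjunction carry a canonical oplax symmetric monoidal structure; by hypothesis this structure is strong. So the same total space also presents $F$ as a functor into $\CMon(\Cat_\infty)$. In other words, $\Un^{\co}F$ taken with its symmetric monoidal structure and $\Un^{\ct}G$ taken with its symmetric monoidal structure agree as symmetric monoidal fibrations. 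Concretely, I would implement this with the pointwise/Day-type structure on sections: the preferred structure of \cref{remark: symmetric monoidal lax limits} on either side is the pointwise tensor product of sections, $(s\otimes t)(i)=s(i)\otimes t(i)$. Under the identification of total spaces, these two pointwise structures coincide.

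The main obstacle is the coherence step: showing that the unique symmetric monoidal structure on the mate diagram $F$ obtained from $G$ really is the one encoded by the same total space. I would handle it in three steps. First, pass to $\CMon(\Cat_\infty)\simeq\CAlg(\Cat_\infty)$-valued diagrams. Second, use that passage to adjoints is an equivalence $\Cat_\infty^{R}\simeq(\Cat_\infty^{L})^{\op}$. Third, use that this equivalence lifts to $\CAlg$: symmetric monoidal right adjoints whose left adjoints are strong correspond to strong symmetric monoidal left adjoints, via the $(\infty,2)$-categorical doctrinal adjunction of Haugseng or Lurie's \emph{HA} 7.3.2.7. This yields $F$ as the mate of $G$ inside $\CMon(\Cat_\infty)$. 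Applying \cite[Proposition 2.21]{GLP} to the underlying $\Cat_\infty$-diagrams of $\Fun^{\otimes\text{-}\lax}(\cC,G(-))$ and $\Fun^{\otimes\text{-}\lax}(\cC,F(-))$, naturally in $\cC$, then matches the universal properties of \cref{remark: symmetric monoidal lax limits}. This gives the symmetric monoidal equivalence.

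One caveat on that last step: the two functor diagrams are themselves adjoint diagrams, since postcomposition with a strong monoidal adjunction gives an adjunction on lax monoidal functor categories. Because of this, the naturality in $\cC$ follows, and Yoneda concludes.
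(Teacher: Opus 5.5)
Your proposal is correct and follows essentially the same route as the paper. The decisive argument is the paper's: lift the adjunctions $F(\alpha)\dashv G(\alpha)$ to symmetric monoidal categories and lax symmetric monoidal functors via Haugseng's monoidal mate equivalence, and postcompose to obtain adjoint diagrams $\Fun^{\otimes\text{-}\lax}(\cC,G(-))$ and $\Fun^{\otimes\text{-}\lax}(\cC,F(-))$. Then apply \cite[Proposition 2.21]{GLP} together with the universal property of \cref{remark: symmetric monoidal lax limits}, and conclude by Yoneda. Your bicartesian-fibration paragraph is not needed for this. It also has a small slip: before you use strongness of the left adjoints, the $G$-side fibration over $\cI^{\op}$ is Cartesian, not a cocartesian family.
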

\begin{proof}
    As in \cite[Proposition 2.21]{GLP}, taking mates gives an equivalence of $\infty$-categories
    \[
        \oplaxlim_{\cI}G
        =\Nat^{\oplax}(\Delta(*),G)
        \simeq\Nat^{\lax}(\Delta(*),F)
        =\laxlim_{\cI^{\op}}F.
    \]
    For the symmetric monoidal statement, the adjunctions lift to symmetric monoidal categories and lax symmetric monoidal functors, see \cite[Corollary C]{monoidal_mate_equiv}. For every symmetric monoidal $\infty$-category $\cC$, postcomposition therefore gives adjoint diagrams $\Fun^{\otimes\text{-}\lax}(\cC,F(-))$ and $\Fun^{\otimes\text{-}\lax}(\cC,G(-))$. Applying the first statement to these diagrams and using \Cref{remark: symmetric monoidal lax limits} gives a natural equivalence
    \[
        \Fun^{\otimes\text{-}\lax}
        \left(\cC,\oplaxlim_{\cI}G\right)
        \simeq
        \Fun^{\otimes\text{-}\lax}
        \left(\cC,\laxlim_{\cI^{\op}}F\right).
    \]
    The claim follows by Yoneda.
\end{proof}
\subsection{Functors that send partially lax limits to limits}\label{subsec:functors-that-send-partially-lax-limits-to-limits}
Throughout this subsection, we denote by $\mathbb{C}$ either the $\infty$-category of presentably symmetric monoidal $\infty$-categories $\CAlg(\PrL)$ or its full subcategory $\CAlg(\PrLst)$ spanned by stable $\infty$-categories. We consider functors $\cR\colon\mathbb{C}\to\cD$, where $\cD$ is any $\infty$-category. We think of $\cR$ as extracting an invariant of presentably symmetric monoidal categories, e.g.~the Picard group.

\medskip \noindent For a small marked $\infty$-category $\cI^{\dagger}=(\cI,\cW)$ and a functor $F\colon\cI\to\mathbb{C}$, we equip $\parlaxlim_{\cI^{\dagger}}F$ with the preferred symmetric monoidal structure from \Cref{remark: symmetric monoidal lax limits}. The partially lax limit is presentable, with colimits computed pointwise, and its tensor product preserves colimits in each variable. It is stable if the categories $F(i)$ are stable. We therefore regard $\parlaxlim_{\cI^{\dagger}}F$ as an object of $\mathbb{C}$. The inclusion $\lim_{\cI}F\to\parlaxlim_{\cI^{\dagger}}F$ from \Cref{exmp:limit-in-lax-limit} is symmetric monoidal and preserves colimits, so it too is a morphism in $\mathbb{C}$.

\begin{definition}\label{def: strictifying functor}
    We call $\cR$ \tdef{lax-invariant} if, for every small marked $\infty$-category $\cI^{\dagger}=(\cI,\cW)$ and every functor $F\colon\cI\to\mathbb{C}$, the map
    \begin{equation}\label{eq: strictifying comparison}
        \cR\left(\lim_{\cI}F\right)
        \too
        \cR\left(\parlaxlim_{\cI^{\dagger}}F\right)
    \end{equation}
    induced by \Cref{exmp:limit-in-lax-limit} is an equivalence.
\end{definition}

\begin{proposition}\label{prop: criterion for strictifying functors}
    Suppose that $\cR$ preserves small limits. Then $\cR$ is lax-invariant if and only if, for every $\cC\in\mathbb{C}$, the constant-functor map induces an equivalence
    \[
        \cR(\cC)\too\cR\left(\Fun(\Delta^1,\cC)\right),
    \]
    where the functor category is equipped with its pointwise symmetric monoidal structure.
\end{proposition}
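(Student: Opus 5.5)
The forward direction is immediate: $\Fun(\Delta^1,\cC)$ is itself a lax limit. Take $\cI=\Delta^1$ with only equivalences marked and $F$ the constant diagram at $\cC$. Then $\lim_{\Delta^1}F\simeq\cC$, and
\[
\laxlim_{\Delta^1}\const_\cC\simeq\Fun_{/\Delta^1}(\Delta^1,\Delta^1\times\cC)\simeq\Fun(\Delta^1,\cC).
\]
The comparison map \eqref{eq: strictifying comparison} is the constant-functor map, and by \Cref{remark: symmetric monoidal lax limits} it is symmetric monoidal for the pointwise structure. So lax-invariance gives the stated condition.

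For the converse, I will use the end formula of \Cref{remark: berman theorem} to write
\[
\parlaxlim_{\cI^\dagger}F\simeq\lim_{(i\to j)\in\Tw(\cI)^{\op}}\Fun\bigl(|\cI^\dagger_{/i}|,F(j)\bigr),
\]
and similarly $\lim_\cI F\simeq\lim_{\Tw(\cI)^{\op}}F(j)$, pulled back along $\Tw(\cI)^{\op}\to\cI$; this is a limit over a cofinal-type reindexing, by the usual end formula for ordinary limits. These are limits in $\mathbb{C}$, since limits in $\CAlg(\PrL)$ are computed underlying with pointwise monoidal structure, matching the preferred structure. Under these identifications the comparison map is induced termwise by the constant maps
\[
F(j)\to\Fun\bigl(|\cI^\dagger_{/i}|,F(j)\bigr),
\]
i.e.\ by restriction along $|\cI^\dagger_{/i}|\to\ast$. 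Since $\cR$ preserves limits, it suffices to show that $\cR(\cC)\to\cR(\Fun(K,\cC))$ is an equivalence for every small $\infty$-category $K$ of the form $|\cI^\dagger_{/i}|$. Each such $K$ has a terminal object, namely the image of $\id_i$, since localization preserves terminal objects.

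The key step is to bootstrap from $\Delta^1$ to arbitrary $K$. I would first treat general small $K$ by writing $K$ as a colimit of simplices, $K\simeq\colim_{\Delta^n\to K}\Delta^n$ in $\Cat_\infty$. Then $\Fun(K,\cC)\simeq\lim\Fun(\Delta^n,\cC)$ in $\mathbb{C}$, and $\cC=\Fun(\ast,\cC)$ is the limit of the constant diagram. It therefore suffices to handle $K=\Delta^n$, because $\cR$ preserves limits and a natural equivalence of diagrams induces an equivalence on limits. Here one must be careful: a colimit of a non-contractible diagram would break the constant-diagram limit. So I would instead only use the terminal object, or work with the spine. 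The spine decomposition $\Delta^n\simeq\Delta^1\amalg_{\Delta^0}\cdots\amalg_{\Delta^0}\Delta^1$ gives $\Fun(\Delta^n,\cC)$ as an iterated pullback of copies of $\Fun(\Delta^1,\cC)$ over $\cC$. Applying $\cR$ and the hypothesis, each leg $\cR(\Fun(\Delta^1,\cC))\to\cR(\cC)$ (evaluation, a retraction of the constant map) is an equivalence. Hence $\cR(\Fun(\Delta^n,\cC))\simeq\cR(\cC)$ via the constant map.

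For general $K$ I would prefer a direct argument that avoids the coherence issue. Restriction $\Fun(K,\cC)\to\cC$ along the terminal object $t$ is left inverse to the constant map. Moreover, the natural transformation $\id_K\Rightarrow\const_t$ gives a functor
\[
\Fun(K,\cC)\to\Fun\bigl(\Delta^1,\Fun(K,\cC)\bigr)
\]
connecting the identity to $\const\circ\ev_t$, and this functor is a morphism in $\mathbb{C}$. Since $\cR$ inverts the constant map $\cD\to\Fun(\Delta^1,\cD)$ for $\cD=\Fun(K,\cC)$, it identifies both evaluation maps $\ev_0,\ev_1$. So $\cR(\id)\simeq\cR(\const\circ\ev_t)$, and the constant map becomes an equivalence with inverse $\cR(\ev_t)$. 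This "$\Delta^1$-homotopy invariance" argument is the main obstacle to make precise, in particular checking that all maps are symmetric monoidal and colimit-preserving. Evaluations and constants are, because of the pointwise structure; the homotopy functor is precomposition along $K\times\Delta^1\to K$, hence also fine.
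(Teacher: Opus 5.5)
Your proof is correct. The forward direction and the reduction via the end formula agree with the paper; you differ in how you pass from $\Delta^1$ to the localized slices $K=|\cI^{\dagger}_{/i}|$.

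The paper avoids the problem you noticed with non-contractible diagrams by comparing $\cR(\Fun(\cK^{\mathrm{gpd}},\cC))\to\cR(\Fun(\cK,\cC))$ rather than $\cR(\cC)\to\cR(\Fun(\cK,\cC))$. Both sides send colimits in $\cK$ to limits, the map is an equivalence for $\cK=\Delta^1$ by hypothesis, and $\Delta^1$ generates $\Cat_{\infty}$ under colimits. Hence the map is an equivalence for every small $\cK$, and the constant map is an equivalence whenever $\cK^{\mathrm{gpd}}\simeq\ast$.

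Your final argument instead applies the hypothesis to $\cD=\Fun(K,\cC)$ rather than to $\cC$. You use $\id_K\Rightarrow\const_t$ as a $\Delta^1$-homotopy in $\mathbb{C}$ from $\id_{\cD}$ to $\const\circ\ev_t$. Since $\ev_0$ and $\ev_1$ are both retractions of the $\cR$-inverted constant map, they agree after $\cR$. This is already a complete argument, so the spine detour can be dropped.

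Your route is more elementary: it needs no generation result for $\Cat_{\infty}$ and no colimit-preservation of groupoid completion. It does need the slightly stronger input that $|\cI^{\dagger}_{/i}|$ has a terminal object. That is true, and in any case you only need some transformation from $\id_K$ to a constant functor. Such a transformation descends from $\cI_{/i}$ because localization commutes with $-\times\Delta^1$. The paper, by contrast, only uses that localization does not change groupoid completion. Its route also proves the stronger intermediate statement that $\cR(\Fun(\cK,\cC))$ depends only on $\cK^{\mathrm{gpd}}$, which covers all weakly contractible $\cK$.
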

\begin{proof}
    Necessity follows from
    \[
        \cC \simeq \lim_{\Delta^1}\Delta(\cC) \qquad \text{and}
        \qquad
        \Fun(\Delta^1,\cC) \simeq \laxlim_{\Delta^1}\Delta(\cC).
    \]
    Conversely, suppose that the displayed map is an equivalence for every $\cC$. For a small $\infty$-category $\cK$, let $\cK^{\mathrm{gpd}}$ denote its groupoid completion. Restriction induces a natural map
    \[
        \cR\left(\Fun(\cK^{\mathrm{gpd}},\cC)\right)
        \too
        \cR\left(\Fun(\cK,\cC)\right).
    \]
    Both sides send colimits in $\cK$ to limits: groupoid completion preserves colimits, functor categories turn colimits in their first variable into limits in $\mathbb{C}$, and $\cR$ preserves limits. The map is an equivalence for $\cK=\Delta^1$ by assumption. Consequently, it is an equivalence for every $\cK$, since $\Cat_{\infty}$ is generated under small colimits by $\Delta^1$. Indeed, $\Cat_{\infty}$ is generated by the simplices $\Delta^n$ by \cite[Example 20.4.1.9]{SAG}; moreover, $\Delta^0$ is a retract of $\Delta^1$, and for $n\geq 1$ we have $\Delta^n \simeq \Delta^1 \amalg_{\Delta^0} \cdots \amalg_{\Delta^0} \Delta^1$ in $\Cat_{\infty}$.

    \medskip \noindent In particular, if $\cK^{\mathrm{gpd}}$ is contractible, the constant-functor map 
    \[
        \cR(\cC)\longrightarrow \cR\left(\Fun(\cK^{\mathrm{gpd}}, \cC) \right)\longrightarrow \cR\left(\Fun(\cK,\cC)\right)
    \]
    is an equivalence.
    
    \medskip \noindent This applies to $\cK=|\cI^{\dagger}_{/i}|$: localization does not change groupoid completion, and $\cI_{/i}$ has a terminal object. The end formula from \Cref{remark: berman theorem}, with pointwise symmetric monoidal structures, therefore gives
    \begin{align*}
        \cR\left(\parlaxlim_{\cI^{\dagger}}F\right)
        &\simeq\lim_{(i\to j)\in\Tw(\cI)^{\op}}
        \cR\left(\Fun\left(|\cI^{\dagger}_{/i}|,F(j)\right)\right)\\
        &\simeq\lim_{(i\to j)\in\Tw(\cI)^{\op}}\cR(F(j))
        \simeq\lim_{\cI}(\cR\circ F)
        \simeq\cR\left(\lim_{\cI}F\right).
    \end{align*}
    Here the third equivalence uses that the target projection $\Tw(\cI)^{\op}\to\cI$ is limit-cofinal. These equivalences identify \eqref{eq: strictifying comparison} with an equivalence.
\end{proof}

\begin{proposition}\label{prop: dualizable objects of partially lax limits}
    The functor
    \[
        (-)^{\dbl}\colon\CAlg(\PrL) \longrightarrow \Catl_{\infty}
    \]
    taking the full subcategory of dualizable objects preserves small limits and is lax-invariant. In particular, there is a preferred symmetric monoidal equivalence
    \[
        \lim_{i\in\cI}F(i)^{\dbl}
        \isoo
        \left(\parlaxlim_{\cI^{\dagger}}F\right)^{\dbl}.
    \]
\end{proposition}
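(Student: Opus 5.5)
The plan is to apply \Cref{prop: criterion for strictifying functors} with $\mathbb{C}=\CAlg(\PrL)$ and $\cR=(-)^{\dbl}$. Two things must be checked: that $(-)^{\dbl}$ preserves small limits, and that for every $\cC\in\CAlg(\PrL)$ the constant-functor map $\cC^{\dbl}\to\Fun(\Delta^1,\cC)^{\dbl}$ is an equivalence. The displayed equivalence $\lim_{i}F(i)^{\dbl}\simeq(\parlaxlim F)^{\dbl}$ then follows by combining lax-invariance with limit preservation. It is symmetric monoidal because the inclusion $\lim_{\cI}F\to\parlaxlim_{\cI^{\dagger}}F$ is symmetric monoidal, and $(-)^{\dbl}$ naturally lands in symmetric monoidal $\infty$-categories, dualizable objects being closed under tensor products and containing the unit.

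\textbf{Limit preservation.} Limits in $\CAlg(\PrL)$ are computed in $\Catl_{\infty}$ with the pointwise symmetric monoidal structure. An object of $\lim_i\cC_i$ is dualizable if and only if each of its components is dualizable in $\cC_i$. Indeed, the duality data (dual object, evaluation, coevaluation, triangle identities) is a diagram of shape given by the free-living dual pair. Since the transition functors are symmetric monoidal, they preserve duals, and by uniqueness of duals the pointwise duals assemble into a compatible family. More formally, the space of duality data on a fixed object is either empty or contractible, and it is computed pointwise. Hence $(\lim\cC_i)^{\dbl}=\lim\cC_i^{\dbl}$ as full subcategories.

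\textbf{The arrow category.} An object of $\Fun(\Delta^1,\cC)$ is an arrow $f\colon x\to y$, with pointwise tensor product and unit $\id_{\mathbb{1}}$. The evaluations at $0$ and $1$ are symmetric monoidal, so if $f$ is dualizable then $x$ and $y$ are dualizable. The key claim is that $f$ must then be an equivalence, i.e.~the essential image of $\cC^{\dbl}$.

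To see this, I would use the colimit-preserving symmetric monoidal left adjoint to $\ev_1$, namely $y\mapsto(\emptyset\to y)$. More directly, I would test against the internal Hom. Let $f^\vee\colon x'\to y'$ be a dual with evaluation $f\otimes f^\vee\to\id_{\mathbb{1}}$ and coevaluation $\id_{\mathbb{1}}\to f^\vee\otimes f$. Each is a commuting square, and evaluating at both endpoints shows that $x'\simeq x^\vee$ and $y'\simeq y^\vee$. The coevaluation square
\[\mathbb{1}\to x^\vee\otimes x \to y^\vee\otimes y\]
says that $\mathrm{coev}_y$ factors through $f^\vee\otimes f$ from $\mathrm{coev}_x$. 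Combined with the evaluation square, this yields maps $y\to x$ that, via the triangle identities, are inverse to $f$.

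A cleaner route is to note that $(x\to y)\otimes(x'\to y')$ has source $x\otimes x'$. The evaluation square forces the map $x\otimes x^\vee\to\mathbb{1}$ to factor through $y\otimes y^\vee$ compatibly. Taking mates, the dual of $f$ in $\cC$ is a map $y^\vee\to x^\vee$, whereas $f^\vee$ is a map $x^\vee\to y^\vee$ pointing the same direction as $f$. Chasing the triangle identities shows these two maps are mutually inverse, so $f^\vee$ and $f$ are equivalences.

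I expect this step — carefully showing that a dualizable arrow is an equivalence, with the right variance bookkeeping in the triangle identities — to be the main obstacle. Once it is done, full faithfulness of $\cC\to\Fun(\Delta^1,\cC)$ onto equivalences gives $\cC^{\dbl}\simeq\Fun(\Delta^1,\cC)^{\dbl}$, and \Cref{prop: criterion for strictifying functors} concludes.
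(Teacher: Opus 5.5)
Your proposal is correct and follows the paper's proof. You reduce via \Cref{prop: criterion for strictifying functors} to two facts: dualizability in a limit is detected componentwise, and a dualizable arrow $f$ must be an equivalence; the paper proves the latter exactly as you sketch, by using the evaluation and coevaluation squares together with the triangle identities to show that the dual $f^\vee\colon x^\vee\to y^\vee$ in $\Fun(\Delta^1,\cC)$ is inverse to $\bD(f)\colon y^\vee\to x^\vee$. One minor slip in a route you discard: $y\mapsto(\emptyset\to y)$ is not symmetric monoidal, since it does not preserve the unit.
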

\begin{proof}
    In a limit of symmetric monoidal $\infty$-categories, dualizability is detected componentwise. Thus, $(-)^{\dbl}$ preserves small limits.

    \medskip \noindent By \Cref{prop: criterion for strictifying functors}, it remains to show that
    \[
        \cC^{\dbl}\too\Fun(\Delta^1,\cC)^{\dbl}
    \]
    is an equivalence. We claim that an arrow $f\colon X\to Y$ is dualizable for the pointwise tensor product precisely when $X,Y$ are dualizable and $f$ is an equivalence.

    \medskip \noindent Suppose that $f$ is dualizable. Evaluation at the endpoints shows that $X$ and $Y$ are dualizable, and we may write its dual as $g\colon\bD(X)\to\bD(Y)$. Let
    \[
        \ev_X\colon\bD(X)\otimes X\to\mathbf{1},
        \qquad
        \mathrm{coev}_X\colon\mathbf{1}\to X\otimes\bD(X)
    \]
    denote the duality maps, and similarly for $Y$. Naturality of evaluation and coevaluation in the arrow category gives
    \[
        \ev_Y\circ(g\otimes f)\simeq\ev_X,
        \qquad
        (f\otimes g)\circ\mathrm{coev}_X
        \simeq\mathrm{coev}_Y.
    \]
    We claim that $g$ and $\bD(f)\colon\bD(Y)\to\bD(X)$ are mutually inverse. By the defining identities for $\bD(f)$,
    \begin{align*}
        \ev_X\circ\bigl((\bD(f)\circ g)\otimes\id_X\bigr)
        &\simeq\ev_Y\circ(g\otimes f)
        \simeq\ev_X,\\
        \bigl(\id_Y\otimes(g\circ\bD(f))\bigr)
        \circ\mathrm{coev}_Y
        &\simeq(f\otimes g)\circ\mathrm{coev}_X
        \simeq\mathrm{coev}_Y.
    \end{align*}
    The triangle identities imply that evaluation and coevaluation induce equivalences
    \begin{align*}
        \Map(\bD(X),\bD(X))
        &\simeq\Map(\bD(X)\otimes X,\mathbf{1}),\\
        \Map(\bD(Y),\bD(Y))
        &\simeq\Map(\mathbf{1},Y\otimes\bD(Y)).
    \end{align*}
    Hence $\bD(f)\circ g\simeq\id_{\bD(X)}$ and $g\circ\bD(f)\simeq\id_{\bD(Y)}$. Thus $\bD(f)$, and therefore $f$, is an equivalence.

    \medskip \noindent Conversely, an equivalence $f \colon X \to Y$ between dualizable objects is equivalent to $\id_X$ in $\Fun(\Delta^1, \cC)$, which is dualizable since $X$ is.
\end{proof}
\begin{corollary}
     The Picard spectrum, the Picard space and the Picard group functors are lax-invariant. 
\end{corollary}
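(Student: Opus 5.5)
The plan is to reduce to \Cref{prop: dualizable objects of partially lax limits} by exhibiting the Picard invariants as functors factoring through the dualizable objects, and then to verify lax-invariance either directly or via \Cref{prop: criterion for strictifying functors}.

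Write $\mathrm{Pic}(\cC)\subseteq \cC^{\simeq}$ for the Picard space, i.e.\ the full subgroupoid of the core spanned by the $\otimes$-invertible objects. It is a grouplike $\mathbb{E}_\infty$-space, and the Picard spectrum is its associated connective spectrum; the Picard group is $\pi_0$ of it. Invertible objects are dualizable, and an object $X$ of $\cC^{\dbl}$ is invertible if and only if $\ev_X$ is an equivalence. Hence $\mathrm{Pic}(\cC)$ is the full subgroupoid of $(\cC^{\dbl})^{\simeq}$ on objects with invertible evaluation, and this description is natural along symmetric monoidal functors.

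For a functor $F\colon\cI\to\mathbb{C}$, \Cref{prop: dualizable objects of partially lax limits} gives a symmetric monoidal equivalence $\lim_{\cI}F(i)^{\dbl}\simeq(\parlaxlim_{\cI^{\dagger}}F)^{\dbl}$. This equivalence is induced by the inclusion $\lim_\cI F\to\parlaxlim_{\cI^\dagger}F$, so it is compatible with the comparison map \eqref{eq: strictifying comparison}. Passing to cores, which preserves limits, we get
\[
(\lim_\cI F)^{\dbl,\simeq}\simeq(\parlaxlim F)^{\dbl,\simeq}.
\]
The full subgroupoids of invertible objects then correspond, since being invertible is a property detected intrinsically in a symmetric monoidal $\infty$-category. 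Therefore $\mathrm{Pic}(\lim F)\to\mathrm{Pic}(\parlaxlim F)$ is an equivalence of $\mathbb{E}_\infty$-spaces, as it is induced by a symmetric monoidal functor. Applying the connective-spectrum functor gives the statement for the Picard spectrum, and applying $\pi_0$ gives it for the Picard group.

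Alternatively, and as a cross-check, one can apply \Cref{prop: criterion for strictifying functors}. First, $\mathrm{Pic}$ preserves small limits, because invertibility is detected componentwise in limits of symmetric monoidal categories. This yields limit-preservation for the space and spectrum versions; for the Picard group, limit preservation fails, so the direct argument above is needed there. Second, $\mathrm{Pic}(\cC)\to\mathrm{Pic}(\Fun(\Delta^1,\cC))$ is an equivalence. This follows from the claim in the proof of \Cref{prop: dualizable objects of partially lax limits}: an invertible arrow is dualizable, hence is an equivalence between invertible objects, and so lies in the image of the constant functor.

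The only subtle step is the compatibility of the equivalence on dualizable objects with the comparison map. I expect this to follow directly because it was constructed from the same inclusion.
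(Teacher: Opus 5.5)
Your main argument is correct and is the paper's proof. The paper simply notes that the Picard space, spectrum and group factor through the lax-invariant functor $(-)^{\dbl}$ of \Cref{prop: dualizable objects of partially lax limits}; you make this explicit by restricting that symmetric monoidal equivalence to invertible objects and then applying the connective-spectrum functor and $\pi_0$. The cross-check via \Cref{prop: criterion for strictifying functors} is not needed, and for the spectrum version it only gives limit preservation if you take connective spectra, not all spectra, as the target.
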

\begin{proof}
    These functors factor through the lax-invariant functor $(-)^{\dbl}$ from \Cref{prop: dualizable objects of partially lax limits}.
\end{proof}
\begin{proposition}\label{prop: rigidification of partially lax limits}
    Let $\CAlg^{\mathrm{rig}}\subseteq\CAlg(\PrLst)$ denote the full subcategory of rigid $\infty$-categories in the sense of Gaitsgory--Rozenblyum \cite{GS17, HSSS21, ramzi2026locallyrigidinftycategories}. Rigidification
    \[
        \Rig\colon\CAlg(\PrLst) \longrightarrow \CAlg^{\mathrm{rig}}
    \]
    preserves small limits and is lax-invariant. Thus, for every small marked $\infty$-category $\cI^{\dagger}$ and every $F\colon\cI\to\CAlg(\PrLst)$, the canonical map
    \[
        \Rig\left(\lim_{\cI}F\right)
        \too\Rig\left(\parlaxlim_{\cI^{\dagger}}F\right)
    \]
    is an equivalence. 
\end{proposition}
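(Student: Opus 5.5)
We apply \Cref{prop: criterion for strictifying functors} to $\cR=\Rig$ (with $\mathbb{C}=\CAlg(\PrLst)$). This needs two inputs: that $\Rig$ preserves small limits, and that $\Rig(\cC)\to\Rig(\Fun(\Delta^1,\cC))$ is an equivalence for every $\cC\in\CAlg(\PrLst)$. The displayed consequence is then just the definition of lax-invariance.

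\textbf{Limits.} We recall from \cite{ramzi2026locallyrigidinftycategories} that rigidification is right adjoint to the inclusion $\CAlg^{\mathrm{rig}}\subseteq\CAlg(\PrLst)$. Concretely, $\Rig(\cC)$ is the universal rigid category mapping to $\cC$, built as $\Ind$ of a suitable small subcategory generated by the dualizable objects of $\cC$ (more precisely, of the dualizable objects of $\Ind(\cC^{\omega_1})$-type presentations). Being a right adjoint, $\Rig$ preserves all limits that exist. We will cite this adjunction rather than reprove it. If only the explicit construction is available, we instead check limit preservation via the mapping property: for rigid $\cA$, $\Map(\cA,\lim F)\simeq\lim\Map(\cA,F(i))$.

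\textbf{Arrow category.} Next we show $\Rig(\cC)\simeq\Rig(\Fun(\Delta^1,\cC))$. Using the adjunction, it suffices to show that for every rigid $\cA$, postcomposition with the constant functor $\cC\to\Fun(\Delta^1,\cC)$ induces an equivalence
\[
\Map_{\CAlg(\PrLst)}(\cA,\cC)\too\Map_{\CAlg(\PrLst)}(\cA,\Fun(\Delta^1,\cC)).
\]
A symmetric monoidal colimit-preserving functor $\cA\to\Fun(\Delta^1,\cC)$ is a pair of functors $F_0,F_1\colon\cA\to\cC$ together with a symmetric monoidal transformation $F_0\Rightarrow F_1$. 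The key input is that a symmetric monoidal natural transformation between symmetric monoidal functors out of a rigid category is automatically an equivalence.

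On dualizable objects, this is the argument of \Cref{prop: dualizable objects of partially lax limits}: each component at a dualizable object is a dualizable arrow, hence an equivalence. Since $\cA$ is rigid, it is generated under colimits by objects built from dualizables. More precisely, for rigid $\cA$ every compact-ish generator is a retract-type colimit of dualizables in the compactly generated case. In general we use that $\cA$ is dualizable in $\PrLst$ and that $\Fun^{L,\otimes}(\cA,-)$ is controlled by the "trace-class" maps, which are determined by dualizable data. Thus the transformation is an equivalence on all of $\cA$, the map of mapping spaces is an equivalence (the space of such transformations is equivalently the space of pairs with an identification), and Yoneda gives the claim.

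\textbf{Main obstacle.} The hard step is the passage from dualizable objects to all of $\cA$ when $\cA$ is rigid but not compactly generated, since then dualizable objects need not generate under colimits. We will first try the characterization of rigid categories as those where $\otimes\colon\cA\otimes\cA\to\cA$ has a $\cA$-bilinear colimit-preserving right adjoint and $\mathbf{1}$ is compact in the "compactly exhaustible" sense. Then $\cA$ is generated by colimits of sequences $\mathbf{1}$-indexed along trace-class (compact) maps, and a symmetric monoidal transformation evaluated on trace-class morphisms can be inverted by the same evaluation/coevaluation trick as in \Cref{prop: dualizable objects of partially lax limits}, using "dualizable morphisms" in place of dualizable objects. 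If this fails, the fallback is to cite directly from \cite{ramzi2026locallyrigidinftycategories} that $\Fun^{\otimes,L}(\cA,-)$ lands in groupoids for rigid $\cA$.
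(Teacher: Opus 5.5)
Your proposal follows the paper's proof exactly: apply \Cref{prop: criterion for strictifying functors}, get limit preservation from $\Rig$ being right adjoint to the inclusion of rigid categories, and reduce the arrow-category condition via Yoneda to the fact that a symmetric monoidal transformation between symmetric monoidal left adjoints out of a rigid $\cA$ is invertible, which the paper simply cites as \cite[Lemma 4.94]{ramzi2026locallyrigidinftycategories} (your fallback). Your self-contained attempt at that fact is only complete when $\cA$ is compactly generated, where dualizables generate; the trace-class sketch for the general case points in the right direction but is not an argument as written, so it is the citation that actually closes the proof.
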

\begin{proof}
    Rigidification is right adjoint to the inclusion of rigid categories  \cite[Theorem 4.81]{ramzi2026locallyrigidinftycategories}, and hence preserves limits with values in $\CAlg^{\mathrm{rig}}$. By \Cref{prop: criterion for strictifying functors}, it suffices to verify the constant-arrow condition.

    \medskip \noindent Let $\cA$ be rigid and consider the map 
    \[ \Map_{\CAlg(\Pr_{\mathrm{st}}^L)}(\cA, \cC) \longrightarrow \Map_{\CAlg(\Pr_{\mathrm{st}}^L)} \left(\cA, \Fun(\Delta^1, \cC) \right) \]
    induced by the constant-functor map. A symmetric monoidal left adjoint $\cA\to\Fun(\Delta^1,\cC)$ is equivalently a monoidal natural transformation between two symmetric monoidal left adjoints $\cA\to\cC$. Such a transformation is an equivalence by \cite[Lemma 4.94]{ramzi2026locallyrigidinftycategories}. Thus, the map on mapping spaces is an equivalence, so the universal property of rigidification and Yoneda give
    \[
        \Rig(\cC)\isoo\Rig\left(\Fun(\Delta^1,\cC)\right),
    \]
    as required. This is the same argument underlying the lax pullback result \cite[Corollary 4.95]{ramzi2026locallyrigidinftycategories}.
\end{proof}

\begin{remark}
    The target in \Cref{prop: rigidification of partially lax limits} matters: the limit of rigidifications is taken in $\CAlg^{\mathrm{rig}}$. The forgetful functor to $\Catl_{\infty}$ does not generally preserve these limits, see \cite[Example 4.88]{ramzi2026locallyrigidinftycategories}.
\end{remark}

\section{Presheaf Categories as Partially Lax Limits} \label{section: presheaf as partially lax lim}
The goal of this section is to identify presheaf categories of admissibility structures with a partially lax limit, i.e.~to prove \cref{mainthm: B}.
\subsection*{The Lax Limit}

For this section we consider a small $\infty$-category $\cG$ together with an admissibility structure $\cG^{\ad}$ in the sense of \cref{def: geometric site}. Moreover, fix a cocompletely symmetric monoidal $\infty$-category $\cC$, i.e.~it has all colimits and the tensor product commutes with colimits in each variable.

\medskip \noindent Since the coCartesian unstraightening of $\cG^{\op} \to \Cat_{\infty}, \ X\mapsto (\cG_{/X}^{\ad})^{\op}$ is the opposite of the target functor $t\colon \Ar^{\ad}(\cG)\to \cG$, the oplax colimit of $X\mapsto (\cG_{/X}^{\ad})^{\op} $ is
\[
     \oplaxcolim_{X\in \cG^{\op}}\;  (\cG_{/X}^{\ad})^{\op} \isoo \Ar^{\ad}(\cG)^{\op},
\]
by \cref{remark: berman theorem}; with initial oplax cocone
    \[
        (\cG^{\ad}_{/-})^{\op} \xRightarrow{\text{oplax}}  \const(\Ar^{\ad}(\cG)^{\op}) , \qquad \left\{\fgt_X^{\op}\colon (\cG^{\ad}_{/X})^{\op} \to  \Ar^{\ad}(\cG)^{\op} \right\}_{X\in \cG^{\op}}.
    \]

For our fixed cocompletely symmetric monoidal $\infty$-category $\cC$ there is an $(\infty, 2)$-functor 
\begin{align}
    \Fun(-,\cC) \colon \tCat^{\text{$1$-op}} \too \CatRotimes, \qquad \cD \mapsto \Fun(\cD, \cC) \label{eq:presheaf-infinity-two-functor}
\end{align}
which equips functor categories with the pointwise symmetric monoidal structure and precomposition functoriality. Here $\CatRotimes$ is the $(\infty,2)$-category of large symmetric monoidal $\infty$-categories and symmetric monoidal right adjoint functors, and $(-)^{\text{$1$-op}}$ means that the $1$-morphisms are reversed. For these constructions and the language of $(\infty,2)$-categories we refer to \cite[Section 2 and Appendix A]{blansblom}.

\medskip \noindent The functor from \Cref{eq:presheaf-infinity-two-functor} sends oplax colimits to oplax limits by the $2$-op version of \cite[Proposition 4.11]{LNP}. Thus, we have constructed a symmetric monoidal equivalence:
    \begin{align}\label{eq:presheaves-on-arrows-oplax}
        \cP^{\cC}(\Ar^{\ad}(\cG)) \isoo \oplaxlim_{X\in \cG} \cP^{\cC}(\cG^{\ad}_{/X}), \quad \Phi \mapsto \left((\cG^{\ad}_{/X})^{\op}\xrightarrow{\fgt} \Ar^{\ad}(\cG)^{\op} \xrightarrow{\Phi} \cC\right)_{X \in \cG},
    \end{align}
where $\cP^{\cC} \colon \cD \mapsto \Fun(\cD^{\op}, \cC)$ denotes $\cC$-valued presheaves.
\begin{lemma}
    Let $f\colon X\to Y$ be a morphism in $\cG$. Recall that the left adjoint $\Lan_{(f^*)^{\op}}$ of  
    \[
        \cP^{\cC}(\cG^{\ad}_{/X})\longrightarrow \cP^{\cC}(\cG^{\ad}_{/Y}), \qquad \Phi \mapsto \left((\cG^{\ad}_{/Y})^{\op}\xrightarrow{(f^*)^{\op}}(\cG^{\ad}_{/X})^{\op} \xrightarrow{\Phi} \cC\right) 
    \]
    inherits an oplax symmetric monoidal structure. Then, $\Lan_{(f^*)^{\op}}$ is strong symmetric monoidal.
\end{lemma}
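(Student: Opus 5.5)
Since $\Lan_{(f^*)^{\op}}$ already carries an oplax symmetric monoidal structure, it suffices to show that the structure maps
\[
\Lan_{(f^*)^{\op}}(\Phi\otimes\Psi)\too \Lan_{(f^*)^{\op}}\Phi\otimes\Lan_{(f^*)^{\op}}\Psi,\qquad \Lan_{(f^*)^{\op}}(\mathbf{1})\too \mathbf{1}
\]
are equivalences. We check this pointwise on $(\cG^{\ad}_{/X})^{\op}$ using the colimit formula for left Kan extension.

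Here $f^*\colon \cG^{\ad}_{/Y}\to\cG^{\ad}_{/X}$ is pullback along $f$, so $(f^*)^{\op}$ goes from $(\cG^{\ad}_{/Y})^{\op}$ to $(\cG^{\ad}_{/X})^{\op}$. For an object $(U\to X)$ we get
\[
\Lan_{(f^*)^{\op}}\Phi(U)\simeq \colim_{(V\to Y,\; U\to V\times_Y X)} \Phi(V),
\]
where the colimit runs over the comma category $\cK_U:=\bigl((\cG^{\ad}_{/Y})^{\op}\bigr)_{/U}$. Its objects are admissible $V\to Y$ together with a map $U\to f^*V$ in $\cG^{\ad}_{/X}$, and by the pullback property this is the same as a map $U\to V$ over $Y$. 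Since $\otimes$ commutes with colimits in each variable, the comparison map at $U$ becomes
\[
\colim_{\cK_U}\Phi(V)\otimes\Psi(V)\too \colim_{\cK_U\times\cK_U}\Phi(V)\otimes\Psi(V'),
\]
induced by the diagonal $\cK_U\to\cK_U\times\cK_U$. The unit map is $\colim_{\cK_U}\mathbf{1}\to\mathbf{1}$. Both are equivalences provided $\cK_U$ is sifted: the diagonal is then colimit-cofinal and $\cK_U$ is weakly contractible. Identifying these comparison maps with the oplax structure maps is routine from the construction of the oplax structure as the mate.

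The main point is therefore that $\cK_U$ is sifted. I would show the stronger statement that it has an initial object in the appropriate variance, which makes the diagonal cofinal. Take $V=X$, viewed over $Y$ via $f$; this need not be admissible over $Y$, so the naive candidate fails. Instead I would use the structure of the objects directly. An object is an admissible $V\to Y$ with a map $U\to V$ over $Y$. The composite $U\to X\to Y$ is generally not admissible, but $U\to V$ is a map between objects of $\cG^{\ad}_{/Y}$ only if it is admissible, and left-cancellativity makes maps in $\cG^{\ad}_{/Y}$ automatically admissible.

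The cleanest formulation: $\cK_U^{\op}$ is the category of factorizations of $U\to Y$ as $U\to V\to Y$ with $V\to Y$ admissible, with morphisms being admissible maps $V\to V'$ over $Y$ under $U$. Since $\cG^{\ad}_{/Y}$ has pullbacks, $\cK_U^{\op}$ has binary products over $Y$: $V\times_Y V'$ is admissible over $Y$, and it receives the map from $U$. So $\cK_U^{\op}$ has finite products (the terminal one being $Y$ itself), hence $\cK_U^{\op}$ is cofiltered-like in the sense that $\cK_U$ has finite coproducts. An $\infty$-category with finite coproducts is sifted, which gives both equivalences.

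The obstacle I expect is bookkeeping. I need to confirm that the relevant indexing category really has coproducts rather than products in the variance used by $\Lan$. I also need to check that the canonical map $U\to V\times_Y V'$ exists and that $V\times_Y V'\to V$ is admissible, which follows from pullback-stability and left-cancellation. With the correct variance, the $V\times_Y V'$ construction shows $\cK_U$ is sifted, and the lemma follows.
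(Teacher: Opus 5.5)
Your argument is correct, but it follows a different route from the paper's.

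\textbf{The paper's route.} The paper never computes $\Lan_{(f^*)^{\op}}$ pointwise. It notes that $(f^*)^{\op}$ is symmetric monoidal for the coCartesian structures, i.e.\ $f^*$ preserves the finite products of $\cG^{\ad}_{/Y}$, which are fibre products over $Y$. It then identifies the pointwise structure on $\cP^{\cC}(\cG^{\ad}_{/X})$ with Day convolution for the cartesian structure (Aoki). Finally it cites the general fact that left Kan extension along a symmetric monoidal functor is strong monoidal for Day convolution, compatibly with the mate oplax structure (Nikolaus, Ching).

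\textbf{Your route.} You unwind everything through the colimit formula and reduce to siftedness of $\cK_U$. You get siftedness from $\cK_U^{\op}=U\downarrow f^*$ having finite products: $Y$ is terminal and $V\times_Y V'$ is the binary product.

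\textbf{Comparison.} The key input is the same in both arguments: $\cG^{\ad}_{/Y}$ has finite products and $f^*$ preserves them. Your siftedness argument is essentially what underlies the cited Day convolution results in this cartesian case. Your version is self-contained and shows directly why the structure maps are invertible. Its cost is that you must identify the mate structure maps with the diagonal-induced maps yourself, which the paper outsources to Ching. This is harmless, since strongness is a property of the structure maps and can be checked on objects up to homotopy.

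\textbf{Two cleanups.}
\begin{enumerate}
\item Drop the abandoned search for an initial object with $V=X$; it plays no role.
\item Fix the sentence calling $U\to V$ a map in $\cG^{\ad}_{/Y}$, since $U\to Y$ need not be admissible. What you actually use is that $U\to f^*V$ is a map in $\cG^{\ad}_{/X}$, automatically admissible by left-cancellativity. You also use $f^*(V\times_Y V')\simeq f^*V\times_X f^*V'$, which is what makes $V\times_Y V'$ the product in $U\downarrow f^*$.
\end{enumerate}
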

\begin{proof}
     Note that $(f^*)^{\op} \colon (\cG^{\ad}_{/Y})^{\op}\to (\cG^{\ad}_{/X})^{\op} $ is symmetric monoidal for the coCartesian symmetric monoidal structures. By \cite[Proposition 2.21]{Aoki} the pointwise symmetric monoidal structure on  $\cP^{\cC}(\cG^{\ad}_{/X})$ agrees with Day convolution with the cartesian monoidal structure on $\cG^{\ad}_{/X}$. Viewed via Day convolution, the oplax symmetric monoidal structure on 
     \[
       \Lan_{(f^*)^{\op}}\colon \cP^{\cC}(\cG^{\ad}_{/Y})\longrightarrow  \cP^{\cC}(\cG^{\ad}_{/X}) 
     \]
     is symmetric monoidal by \cite[Corollary~3.8]{Nikolaus2016} and \cite[Proposition~6.18 and its proof]{Ching2021}.
\end{proof}
Therefore, we can apply the mate equivalence (\Cref{prop:oplax-vs-lax-lim-for-adjoints-diagram}) to our functor $\cP^{\cC}(\cG^{\ad}_{/-})\colon \cG\to \tCat$  to obtain a diagram
\begin{equation}
    \cG^{\op}\longrightarrow \Catotimes, \qquad X \mapsto \cP^{\cC}(\cG^{\ad}_{/X}) \label{eq:main-functor-presheaves}
\end{equation}
with transition maps given by componentwise left adjoints, i.e.~$\Lan_{(f^*)^{\op}}$, and an equivalence 
\[
    \oplaxlim_{X\in \cG} \cP^{\cC}(\cG^{\ad}_{/X}) \isoo \laxlim_{X\in \cG^{\op}}  \cP^{\cC}(\cG^{\ad}_{/X})
\]
of symmetric monoidal $\infty$-categories. Putting this together with \cref{eq:presheaves-on-arrows-oplax} we arrive at:

\begin{lemma}
    There is a symmetric monoidal equivalence 
    \begin{align}
        \cP^{\cC}(\Ar^{\ad}(\cG)) \isoo \laxlim_{X\in \cG^{\op}} \cP^{\cC}(\cG^{\ad}_{/X}), \quad \Phi \mapsto \left((\cG^{\ad}_{/X})^{\op}\xrightarrow{\fgt} \Ar^{\ad}(\cG)^{\op} \xrightarrow{\Phi} \cC\right)_{X \in \cG^{\op}}.\label{eq:description-lax-limit-presheaves}
    \end{align}
\end{lemma}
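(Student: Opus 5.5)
The proof will compose the two equivalences constructed just before the statement and then check that the resulting functor is given by the displayed formula. First, \cref{eq:presheaves-on-arrows-oplax} gives a symmetric monoidal equivalence
\[
\cP^{\cC}(\Ar^{\ad}(\cG)) \isoo \oplaxlim_{X\in \cG} \cP^{\cC}(\cG^{\ad}_{/X}),
\]
obtained by applying the $(\infty,2)$-functor \cref{eq:presheaf-infinity-two-functor} to the oplax colimit presentation $\oplaxcolim_{X}(\cG^{\ad}_{/X})^{\op}\simeq \Ar^{\ad}(\cG)^{\op}$. This step uses the (2-op) version of \cite[Proposition 4.11]{LNP}. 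By construction, its $X$-component is restriction along $\fgt_X^{\op}$.

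Second, I will apply \Cref{prop:oplax-vs-lax-lim-for-adjoints-diagram} to the diagram $G = \cP^{\cC}(\cG^{\ad}_{/-})\colon \cG\to \CatRotimes$. Its transition functors are the restrictions $(f^*)^{\op,*}$, which are symmetric monoidal for the pointwise structures because they are precompositions. By the preceding lemma, their left adjoints $\Lan_{(f^*)^{\op}}$ are strong symmetric monoidal. So the hypotheses of the symmetric monoidal part of \Cref{prop:oplax-vs-lax-lim-for-adjoints-diagram} hold, and we get a symmetric monoidal equivalence $\oplaxlim_{\cG} G \simeq \laxlim_{\cG^{\op}} F$, where $F$ is the diagram \cref{eq:main-functor-presheaves}. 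Composing the two equivalences yields the symmetric monoidal equivalence in the statement.

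It remains to identify the components. The mate equivalence of \cite[Proposition 2.21]{GLP} acts as the identity on objects $\cF_X$ and only replaces each structure map $\cF_X \to f^{*}\cF_Y$ by its adjoint $\Lan\,\cF_X\to\cF_Y$. Hence the $X$-component of the composite is still $\Phi\mapsto \Phi\circ\fgt_X^{\op}$, as claimed.

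The main obstacle is coherence: the mate correspondence has to be natural enough that it is compatible with the symmetric monoidal structures and with evaluation at each $X$. I would handle this by appealing directly to \Cref{prop:oplax-vs-lax-lim-for-adjoints-diagram}, whose Yoneda argument already works with $\Fun^{\otimes\text{-}\lax}(\cC,-)$. For the components, I would use that evaluation at $X$ commutes with the mate equivalence, since on sections of the unstraightenings the fibres are unchanged.
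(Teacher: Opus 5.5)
Your proposal is correct and matches the paper's argument: the paper obtains the lemma by composing the equivalence \eqref{eq:presheaves-on-arrows-oplax} with the symmetric monoidal mate equivalence of \Cref{prop:oplax-vs-lax-lim-for-adjoints-diagram}, whose hypotheses come from the preceding lemma that each $\Lan_{(f^*)^{\op}}$ is strong symmetric monoidal. Your remark that the mate equivalence leaves each component $\cF_X$ unchanged and only transposes the structure maps is what justifies the displayed formula; the paper leaves this point implicit.
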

\subsection*{The Partially Lax Subcategory}    
By \cref{remark: symmetric monoidal lax limits}, the partially lax limit of the functor $\cP^{\cC}(\cG^{\ad}_{/-})$ in \Cref{eq:main-functor-presheaves} is the symmetric monoidal full subcategory of $\laxlim_{X\in \cG^{\op}} \cP^{\cC}(\cG^{\ad}_{/X})$ consisting of those objects of the lax limit such that the structure maps at admissible morphisms are equivalences.

\begin{observation}\label{obv:left-Kan-along-admissible} Since admissible morphisms are closed under composition, for any admissible morphism $f\colon X\to Y$ the functor 
\[
    f^*\colon \cG^{\ad}_{/Y} \longrightarrow \cG^{\ad}_{/X}
\]
is right adjoint to $f\circ (-)\colon \cG^{\ad}_{/X}\to \cG^{\ad}_{/Y} $. Consequently, $\Lan_{(f^*)^{\op}}$ identifies with
\[
    \cP^{\cC} \left(\cG^{\ad}_{/Y} \right)\longrightarrow \cP^{\cC} \left(\cG^{\ad}_{/X} \right), \qquad \Phi \mapsto \left((\cG^{\ad}_{/X})^{\op}\xrightarrow{f\circ (-)} (\cG^{\ad}_{/Y})^{\op} \xrightarrow{\Phi} \cC\right).
\]
\end{observation}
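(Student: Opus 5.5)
The statement has two parts: the adjunction $f\circ(-)\dashv f^*$, and the resulting formula for $\Lan_{(f^*)^{\op}}$. I prove the adjunction first and then deduce the formula formally.

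\emph{The adjunction.} Let $f\colon X\to Y$ be admissible. The functor $f\circ(-)\colon \cG^{\ad}_{/X}\to\cG^{\ad}_{/Y}$ is well defined because admissible morphisms are closed under composition. The functor $f^*\colon \cG^{\ad}_{/Y}\to\cG^{\ad}_{/X}$ sends an admissible $a\colon A\to Y$ to its pullback $a'\colon A\times_Y X\to X$, which exists and is admissible by condition b).

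On the full slices there is the standard adjunction $f_!\dashv f^*$ between $\cG_{/X}$ and $\cG_{/Y}$, given by the universal property of the pullback:
\[
\Map_{\cG_{/Y}}(f\circ b, a)\simeq \Map_{\cG_{/X}}(b, a'),
\]
naturally in $b\colon B\to X$ and $a\colon A\to Y$, whenever the pullback exists.

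It remains to restrict this to the admissible slices. Mapping spaces in $\cG^{\ad}_{/Y}$ are the components of mapping spaces in $\cG_{/Y}$ consisting of maps $B\to A$ over $Y$ whose underlying morphism is admissible. So I must check that the bijection above preserves admissibility of the underlying maps. Let $u\colon B\to A$ lie over $Y$, and let $v\colon B\to A\times_Y X$ be the induced map over $X$. Then $u=\mathrm{pr}_A\circ v$.

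\begin{itemize}
\item If $v$ is admissible, then $u$ is admissible, because $\mathrm{pr}_A$ is a pullback of the admissible map $f$ (condition b)) and admissible maps compose.
\item Conversely, $a'\circ v=b$ is admissible and $a'$ is admissible, so condition c) shows that $v$ is admissible.
\item If $u$ is admissible, then $a\circ u=f\circ b$, so this case also gives that $v$ is admissible.
\end{itemize}

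In fact, every map in $\cG_{/X}$ between admissible objects is automatically admissible, by left cancellation (condition c)). So the admissible slices are full subcategories of the ordinary slices, and the restricted adjunction is immediate. I expect this check to be the only point requiring care.

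\emph{The formula for $\Lan_{(f^*)^{\op}}$.} Passing to opposites, $(f^*)^{\op}$ is left adjoint to $(f\circ-)^{\op}$. By the general fact that left Kan extension along a left adjoint $L$ with right adjoint $R$ equals restriction along $R$ (pointwise, the indexing category of the Kan extension has a terminal object given by the unit), we get
\[
\Lan_{(f^*)^{\op}}\simeq (-)\circ (f\circ -)^{\op}.
\]
Equivalently, restriction along $(f\circ-)^{\op}$ is left adjoint to restriction along $(f^*)^{\op}$, since precomposition is a $2$-functor and preserves adjunctions, reversing their direction. Uniqueness of adjoints then gives the identification, including its naturality.
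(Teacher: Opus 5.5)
Your proof is correct and follows the same route as the paper, which only sketches it: the adjunction $f\circ(-)\dashv f^*$ comes from closure under composition, pullback-stability and the universal property of the pullback, and then $\Lan_{(f^*)^{\op}}$ is identified with restriction along the right adjoint $(f\circ-)^{\op}$. You also supply a detail the paper leaves implicit, namely that left cancellation makes $\cG^{\ad}_{/X}\subseteq\cG_{/X}$ a full subcategory. One small slip: in the general fact you cite, the terminal object of the comma category computing $\Lan_L F(b)$ is the counit $LRb\to b$ of $L\dashv R$, not the unit. Here that counit is the opposite of the unit of $f\circ(-)\dashv f^*$.
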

~\\
We apply the observation to describe the full subcategory defining the partially lax limit: Under the equivalence \eqref{eq:description-lax-limit-presheaves}, the partially lax limit is equivalent to the symmetric monoidal full subcategory of $\cP^{\cC}(\Ar^{\ad}(\cG))$ spanned by those $\Phi \in \cP^{\cC}(\Ar^{\ad}(\cG))$ such that for every admissible $a \colon A \to Y$ and admissible $f \colon Y \to X$ the natural map
    \[  \Phi(f \circ a) \longrightarrow \Phi(a) \]
    is an equivalence, where we use \cref{obv:left-Kan-along-admissible} to identify the source with $\Phi(f \circ a)$. This map is $\Phi$ applied to the morphism $a \to (f \circ a)$ in $\Ar^{\ad}(\cG)$ given by 
    \begin{center}
        \begin{tikzcd}
            A \arrow[r, equal] \arrow[d, "a", swap] & A \arrow[d, "a"]
            \\ Y \arrow[dr, "f", swap, bend right] & Y \arrow[d, "f"]
            \\ & X
        \end{tikzcd}
    \end{center}
    with the constant homotopy filling the diagram. If $\Phi$ satisfies this condition, we say that $\Phi$ is \tdef{partially lax}.
    
    \medskip \noindent Precomposition by the source functor $s\colon \Ar^{\ad}(\cG)\to \cG$ induces a symmetric monoidal functor $$(s^{\op})^*\colon \cP^{\cC}(\cG)\longrightarrow \cP^{\cC}(\Ar^{\ad}(\cG))$$ for the pointwise symmetric monoidal structures. This allows us to state a precise version of Theorem~B.
    \begin{theorem}\label{thm:presheaves-as-lax-lim}
    Let $\cG$ be a small $\infty$-category and $\Ar^{\ad}(\cG)\subseteq \Ar(\cG)$ an admissible class of morphisms. Let $\cC$ be a cocompletely symmetric monoidal $\infty$-category. The symmetric monoidal composite functor 
    \[  
        \cP^{\cC}(\cG)\xrightarrow{(s^{\op})^*} \cP^{\cC}(\Ar^{\ad}(\cG)) \xrightarrow[\eqref{eq:description-lax-limit-presheaves}]{\simeq}  \laxlim_{X\in \cG^{\op}} \cP^{\cC}(\cG^{\ad}_{/X}), \quad \Phi\mapsto\left((\cG^{\ad}_{/X})^{\op}\xrightarrow{\fgt} \cG^{\op} \xrightarrow{\Phi} \cC\right)_{X \in \cG^{\op}}
    \]
    is fully faithful and has essential image the symmetric monoidal full subcategory $\parlaxlim_{X\in \cG^{\op}} \cP^{\cC}(\cG^{\ad}_{/X})$. 
\end{theorem}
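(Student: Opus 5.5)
Since \eqref{eq:description-lax-limit-presheaves} is an equivalence, and the text above identifies the partially lax limit with the full subcategory of partially lax presheaves on $\Ar^{\ad}(\cG)$, it suffices to show the following: $(s^{\op})^*\colon \cP^{\cC}(\cG)\to\cP^{\cC}(\Ar^{\ad}(\cG))$ is fully faithful with essential image the partially lax presheaves. Symmetric monoidality is then automatic, since a full subcategory closed under the tensor product inherits it. My plan is to realize $s$ as a localization.

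Consider the class $W$ of morphisms in $\Ar^{\ad}(\cG)$ of the form $(A\xrightarrow{a}Y)\to(A\xrightarrow{fa}X)$, with identity on sources and admissible $f$ on targets; these are exactly the maps tested in the definition of partially lax. The source functor $s$ inverts $W$. I claim $s$ exhibits $\cG$ as the localization $\Ar^{\ad}(\cG)[W^{-1}]$. The cleanest route is to use the identity section $i\colon\cG\to\Ar^{\ad}(\cG)$, $A\mapsto \id_A$, which is well-defined because identities are admissible and is functorial because a commutative square with admissible vertical sides is a morphism in $\Ar^{\ad}(\cG)$.

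\begin{itemize}
\item We have $s\circ i=\id_{\cG}$.
\item There is a natural transformation $i\circ s\Rightarrow \id$ whose component at $a\colon A\to Y$ is the square $(A\xrightarrow{\id}A)\to(A\xrightarrow{a}Y)$ with $a$ on the targets. Since $a$ is admissible, this component lies in $W$.
\end{itemize}

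A functor with a section and a natural transformation to the identity through $W$-maps is a localization at $W$ (standard, e.g.\ Cisinski's criterion). In fact $i$ is left adjoint to $s$: maps $\id_B\to a$ correspond to maps $B\to A$, because the target component is forced by commutativity. Hence $s$ is a reflective localization at the morphisms it inverts. For this I would check that the unit $\id_A\to a$ lies in $W$ and that the maps inverted by $s$ are generated by $W$ under 2-out-of-3; the latter uses left-cancellativity, condition (c).

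Given this localization, $(s^{\op})^*$ is fully faithful with essential image the presheaves inverting $W$. This holds for any $\cC$, since $\Fun(\cD[W^{-1}]^{\op},\cC)\simeq\Fun^{W}(\cD^{\op},\cC)$. The essential image is precisely the partially lax $\Phi$. Finally, I must check that the composite matches the stated formula. This amounts to $\fgt_X$ composed with $s$ being the forgetful functor $\cG^{\ad}_{/X}\to\cG$, which holds by construction.

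The main obstacle is making sure that the map $\Phi(f\circ a)\to\Phi(a)$ used in the partially lax condition really is $\Phi$ applied to the $W$-morphism. That is, the identification of $\Lan$ along admissibles in \cref{obv:left-Kan-along-admissible} must be compatible with the mate and oplax equivalences, which the paper already asserts. I would verify the remaining reflective-localization claim directly by computing mapping spaces in $\Ar^{\ad}(\cG)$ as pullbacks.
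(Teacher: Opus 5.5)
Your proposal is correct and is essentially the paper's argument. The paper uses the same section $\const = i$ and the same transformation $i\circ s\Rightarrow \id$ (the counit of $\const\dashv s$, with components $\id_A\to a$). It gets full faithfulness of $s^*$ from the adjunction, and the essential image from the fact that these counit components are exactly the maps tested by partial laxness; your localization-at-$W$ argument is the same idea in slightly different packaging.

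Your reflective-localization paragraph is redundant, and it contains two inaccuracies:
\begin{itemize}
\item $\id_A\to a$ is the counit, not the unit.
\item Identifying the $s$-inverted maps with the saturation of $W$ does not need condition (c); it follows directly from the naturality square of the counit.
\end{itemize}
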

\begin{proof}
Consider the functors
    \begin{center}
        \begin{tikzcd}
            \cG \arrow[r, "\const"] \arrow[rr, bend right, equal] & \Ar^{\ad}(\cG) \arrow[r, "s"] & \cG
        \end{tikzcd}
    \end{center}
    with $\const \dashv s$ and adjunction counit evaluated at an admissible morphism $a:A\to B$ depicted by the morphism $\id_{A}\to a$ in the arrow category:
    \[\begin{tikzcd}[cramped]
    	A & A \\
    	A & B
    	\arrow["{\id_{A}}", from=1-1, to=1-2]
    	\arrow[from=1-1, to=2-1]
    	\arrow["a", from=1-2, to=2-2]
    	\arrow["a", from=2-1, to=2-2, swap]
    \end{tikzcd}\]

    \noindent It induces functors
    \begin{center}
        \begin{tikzcd}
            \cP^{\cC}(\cG) \arrow[rr, bend right, swap, equal] \arrow[r, "s^*"] & \cP^{\cC}(\Ar^{\ad}(\cG)) \arrow[r, "\const^*"] & \cP^{\cC}(\cG)
        \end{tikzcd}
    \end{center}
    so that in particular $s^*$ is fully faithful. 
    It suffices to show that the essential image of $s^*$ consists precisely of the partially lax presheaves.
    
    \medskip \noindent First, let $F \in \cP^{\cC}(\cG)$. Then, $s^*F \in \cP^{\cC}(\Ar^{\ad}(\cG))$ lies in the partially lax limit. Indeed, let $f \colon B \to C$ be an admissible morphism. We need to check that the map $F(A) = s^*F(f \circ a) \to s^*F(a) = F(A)$ is an equivalence, but it is the identity. Conversely, we need to check that any partially lax $\Phi \in \cP^{\cC}(\Ar^{\ad}(\cG))$ is in the image of $s^*$. Indeed, the counit of $\const \dashv s$ induces a natural transformation $\varepsilon^* \colon \Phi \Rightarrow s^* \const^* \Phi$ which pointwise evaluates to $\Phi(a) = \Phi(a \circ \id_{s(a)}) \to \Phi(\id_{s(a)})$. This is an equivalence because $\Phi$ is partially lax. So $\Phi \simeq s^* \const^* \Phi$.
\end{proof}

\section{Sheaf Categories as Partially Lax Limits} \label{section: sheafifying}
We will now pass from presheaves to sheaves, and prove \cref{mainthm: A}. Fix a geometric site $(\cG,\cG^{\ad}, \tau)$.
\begin{lemma}\label{lem:characterization-of-sheaves}
    Let $\cC$ be a complete $\infty$-category. 
    A presheaf $\Phi \colon \cG^{\op}\to \cC$ is a sheaf if and only if for all $X\in \cG$ the composite functor
    \begin{center}
        \begin{tikzcd}
            \left(\cG_{/X}^{\ad} \right)^{\op} \arrow[r, hookrightarrow] & \left(\cG_{/X} \right)^{\op} \arrow[r] & \cG^{\op} \arrow[r, "\Phi"] & \cC
        \end{tikzcd}
    \end{center}
    is a sheaf.
\end{lemma}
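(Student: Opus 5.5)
The plan is to compare the sheaf conditions on both sides directly via covering sieves. The topology $\tau^{\ad}$ on $\cG^{\ad}_{/X}$ is the one pulled back from $\tau$ (\cite[Proposition 20.6.1.1]{SAG}): a sieve on an object $(a\colon U\to X)$ of $\cG^{\ad}_{/X}$ is covering exactly when it is generated by admissible maps $U_i\to U$ whose underlying family is a $\tau$-covering family of $U$. Note that morphisms in $\cG^{\ad}_{/X}$ between admissible objects over $X$ are automatically admissible by left cancellativity (condition c)). Moreover, the forgetful functor $\cG^{\ad}_{/X}\to\cG$ preserves fibre products, since pullbacks of admissibles exist in $\cG$ and are admissible.

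For the direction ``sheaf $\Rightarrow$ restrictions are sheaves'', I take a covering sieve on $(U\to X)$ in $\cG^{\ad}_{/X}$, generated by admissible $U_i\to U$ forming a $\tau$-cover. The sheaf condition for $\Phi$ can be checked on generating families, i.e.\ as a limit over the Čech nerve, or equivalently over the sieve viewed as a full subcategory of $\cG_{/U}$. The iterated fibre products $U_{i_0}\times_U\cdots\times_U U_{i_n}$ are admissible over $U$, hence over $X$, and agree with those computed in $\cG$. So the Čech descent diagram for the restricted presheaf is literally the one for $\Phi$, and the limit condition transfers.

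For the converse, let $\Phi$ have all restrictions sheaves and let $S$ be a $\tau$-covering sieve on $X\in\cG$. By the geometric site axiom, $S$ contains a subsieve $S'$ generated by admissible morphisms $U_i\to X$ that is still $\tau$-covering. Since sheaf conditions for a sieve and a finer covering subsieve are equivalent given descent for all covers (standard, cf.\ \cite[Lemma 6.2.2.? / HTT 6.2.2]{}), it is cleaner to argue as follows. Presheaves satisfying descent for a family of sieves that generates the topology and is stable under pullback are sheaves. The admissibly generated covering sieves form such a family, since they are pullback-stable (pullbacks of admissibles are admissible) and every covering sieve contains one. Descent for $\Phi$ along an admissibly generated sieve on $X$ is exactly descent of the restriction to $\cG^{\ad}_{/X}$ at the terminal object $\id_X$, by the same Čech-nerve comparison as above. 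This gives the claim.

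The main obstacle is the reduction step: justifying that descent only needs checking on admissibly generated covering sieves. I would prove it by showing that these sieves form a Grothendieck pretopology-like basis generating $\tau$, and then invoke that the sheaf condition relative to a basis closed under pullback and local character coincides with that for the generated topology (e.g.\ HTT~6.2.2.7-style arguments, applied to $\cC$-valued presheaves via $\Map(c,-)$ for $c\in\cC$ to reduce to anima). One can also simply deduce descent along $S$ from descent along $S'$ and along the pullbacks of $S$ to each $U_i$, which are again covering and refined by admissible subcovers, using the usual two-out-of-three for sieves.
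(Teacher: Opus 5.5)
Your proposal is correct, but it takes a genuinely different route from the paper. The paper gives no argument of its own. It cites \cite[Prop.~20.6.3.1]{SAG} for $\cC=\An$, with a footnote translating Lurie's formulation into the slice formulation. It then asserts that Lurie's proof, which according to the paper uses only cofinality, carries over to a general complete $\cC$.

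You instead give a self-contained proof, in three steps:
\begin{enumerate}
\item For each admissibly generated sieve, you identify the descent condition on $\cG$ with the one on $\cG^{\ad}_{/X}$ by computing both through the same Čech nerve. This nerve exists and agrees in both categories because pullbacks of admissibles are admissible and $\cG^{\ad}_{/X}\subseteq \cG_{/X}$ is full by left cancellation. This replaces a cofinality comparison between $S^{\ad}$ and the sieve it generates in $\cG$.
\item For the converse, you use that admissibly generated covering sieves are pullback-stable and refine every $\tau$-cover.
\item You handle a general $\cC$ by reducing to anima via $\Map_{\cC}(c,-)$.
\end{enumerate}
Your version makes explicit which axioms enter: conditions b) and c), never d). It also treats general $\cC$ honestly. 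Your reduction shows directly that the $\cC$-valued statement follows from the $\An$-valued one, which makes the paper's remark that ``the proof goes through'' unnecessary. The paper's route buys only brevity.

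One point to tidy in your last paragraph. The pullbacks of $S$ to the $U_i\in S'$ are maximal sieves, so they carry no information. The step you want, after reducing to $\An$-valued $\Phi$, is the following. By universality of colimits in presheaves on $\cG$, the inclusion $S'\hookrightarrow S$ is the colimit over $(g\colon V\to X)\in S$ of the inclusions $g^*S'\hookrightarrow y(V)$. Each $g^*S'$ is generated by the admissible maps $U_i\times_X V\to V$ and is covering. So by your Čech comparison, applied to the hypothesis at $V$, the functor $\Map(-,\Phi)$ inverts each $g^*S'\hookrightarrow y(V)$, hence inverts $S'\hookrightarrow S$. Two-out-of-three with $S'\hookrightarrow y(X)$ then gives descent along $S$. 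No local-character property of the basis is needed.
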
   
\begin{proof}
    In the case of $\cC=\Spc$ this is \cite[Prop. 20.6.3.1]{SAG}.\footnote{To translate his result observe that a presheaf is a sheaf if it satisfies descent for every cover of every $X\in \cG$ and this can be checked in the slice category over $X$.} Lurie's proof goes through for a general target category $\cC$ as it only uses cofinality arguments. 
\end{proof}

\noindent We can sheafify the pointwise monoidal structures on presheaf categories to monoidal structures on sheaf categories. With this structure we now prove our  \cref{mainthm: A}.
\begin{theorem}\label{thm:main-theorem}  Let $\cC$ be a presentably symmetric monoidal $\infty$-category. There is a functor
    \begin{align*}
        \Sh (\cG^{\ad}_{/-};\cC)\colon\cG^{\op} \longrightarrow \Catl^{\otimes}, \qquad X \mapsto \Sh(\cG^{\ad}_{/X};\cC) 
    \end{align*}
    to the $\infty$-category of large symmetric monoidal $\infty$-categories and a natural transformation 
    \begin{align*}
        L\colon \cP^{\cC}(\cG^{\ad}_{/-}) \Longrightarrow \Sh(\cG^{\ad}_{/-};\cC)\qquad \text{of functors } \cG^{\op}\to \Catl^{\otimes}
    \end{align*}
    whose components are $\tau$-sheafification. Moreover, the induced symmetric monoidal functor
    \begin{align*}
        \parlaxlim L\colon \cP^{\cC}(\cG) \longrightarrow \parlaxlim_{X\in \cG^{\op}} \Sh \left(\cG^{\ad}_{/X};\cC \right)
    \end{align*}
    admits a fully faithful right adjoint with essential image the $\tau$-sheaves.
\end{theorem}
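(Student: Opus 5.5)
First I construct the functor $\Sh(\cG^{\ad}_{/-};\cC)$ together with the transformation $L$. I treat the sheaf subcategories as a localization of the diagram \eqref{eq:main-functor-presheaves}. For $f\colon X\to Y$, the pullback $f^*\colon \cG^{\ad}_{/Y}\to\cG^{\ad}_{/X}$ preserves pullbacks and covers, because admissibles and $\tau$-covers are pullback stable and $\tau^{\ad}$ is pulled back from $\tau$. So $f^*$ is a morphism of sites, and restriction along $(f^*)^{\op}$ preserves sheaves. Passing to left adjoints, $\Lan_{(f^*)^{\op}}$ sends $\tau$-local equivalences to $\tau$-local equivalences. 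The full subcategories of sheaves are therefore stable under the right adjoint diagram, and $L_\tau\circ\Lan_{(f^*)^{\op}}$ descends along the localizations.

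To get this with all coherences, I use that the sheaf subcategories form a sub-diagram of the right adjoint diagram $X\mapsto\cP^{\cC}(\cG^{\ad}_{/X})$ in $\CatRotimes$; this requires the inclusions of sheaves to be lax symmetric monoidal. Applying the mate construction of \Cref{prop:oplax-vs-lax-lim-for-adjoints-diagram} to the inclusion transformation then yields the left adjoint diagram $\Sh(\cG^{\ad}_{/-};\cC)\colon\cG^{\op}\to\Catl^{\otimes}$ together with $L$. The sheafifications $L_\tau$ are strongly symmetric monoidal for the sheafified pointwise structure, since local equivalences are stable under tensoring with objects, $\cC$ being presentably symmetric monoidal. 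The composites $L_\tau\Lan$ are then strong as well, by the strongness of $\Lan_{(f^*)^{\op}}$ shown in the lemma of \Cref{section: presheaf as partially lax lim}. Hence $L$ is a natural transformation of functors into $\Catl^{\otimes}$.

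Next, $L$ induces $\parlaxlim L$, which I precompose with the equivalence of \Cref{thm:presheaves-as-lax-lim}. On a partially lax family $(\Phi_X)$ it applies $L_\tau$ componentwise, and it sends admissible structure maps, which are equivalences, to equivalences. I then build its right adjoint componentwise. The inclusions $\Sh(\cG^{\ad}_{/X};\cC)\subseteq\cP^{\cC}(\cG^{\ad}_{/X})$ assemble into a lax-limit-level right adjoint of $\laxlim L$: the mates of the transition maps are the restrictions, which commute with the inclusions. This is the step I expect to need care, namely that a levelwise adjunction of diagrams induces an adjunction on (partially) lax limits. I would use the section description $\Fun^\dagger_{/\cI}(\cI,\Un^{\co}F)$ of \Cref{remark: berman theorem} together with relative adjunctions over $\cG^{\op}$. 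The inclusions preserve coCartesian edges over admissibles, because for admissible $f$ the functor $\Lan_{(f^*)^{\op}}$ is restriction along $f\circ(-)$ by \Cref{obv:left-Kan-along-admissible}, and this restriction preserves sheaves, so no sheafification is needed there. Consequently the right adjoint restricts to the partially lax parts and is fully faithful, as each inclusion is.

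Finally, I identify the essential image. Under \Cref{thm:presheaves-as-lax-lim}, the image of the right adjoint consists of those $\Phi\in\cP^{\cC}(\cG)$ whose restriction to every $(\cG^{\ad}_{/X})^{\op}$ is a $\tau$-sheaf. By \Cref{lem:characterization-of-sheaves}, these are exactly the $\tau$-sheaves on $\cG$.
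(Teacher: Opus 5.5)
Your proposal is correct. From the adjunction on partially lax limits onward it coincides with the paper's proof:
\begin{itemize}
\item partially lax limits are described as sections of $\Un^{\co}$ sending admissibles to coCartesian edges;
\item the adjunction comes from the relative adjunction between $\widetilde{L}$ and the inclusion of the full subcategory $\cE\subseteq\Un^{\co}\cP^{\cC}(\cG^{\ad}_{/-})$ of fiberwise sheaves;
\item this inclusion preserves coCartesian edges over admissibles by \Cref{obv:left-Kan-along-admissible};
\item the essential image is identified by \Cref{lem:characterization-of-sheaves}.
\end{itemize}

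The difference is in how $\Sh(\cG^{\ad}_{/-};\cC)$, $L$ and their monoidal structures are produced. The paper works on the left-adjoint side. It shows directly that $\cE\to\cG^{\op}$ is coCartesian (following \cite[Lemma 4.12]{LNP}) and gets $\widetilde{L}$ together with the relative adjunction in one step from \cite[Proposition D.7]{bachmannHoyois}. It then lifts $L$ to a symmetric monoidal transformation by \cite[Corollary 4.14]{LNP}.

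You work on the right-adjoint side, where the sheaves form an honest subfunctor because restriction preserves sheaves, and then dualize. Applying $\Catl^{R}\simeq(\Catl^{L})^{\op}$ to the functor $\cG\times\Delta^1\to\Catl^{R}$ encoding the inclusion yields $L$ as a strict transformation directly, with $\cE$ reappearing as the bicartesian unstraightening. The relative right adjoint then follows from the standard criterion for coCartesian-edge-preserving maps with fiberwise right adjoints. This is more formal for the underlying $\infty$-categories.

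The monoidal step, however, needs more precision than you give it. Neither the sheaf inclusions nor the restriction functors between sheaf categories are strong monoidal for general $\cC$; the latter are pushforwards and need not commute with sheafification. So your sub-diagram does not live in $\CatRotimes$ as the paper defines it, which uses strong symmetric monoidal right adjoints. To make the sub-diagram precise:
\begin{itemize}
\item regard $\Sh(\cG^{\ad}_{/X};\cC)^{\otimes}$ as a full suboperad of $\cP^{\cC}(\cG^{\ad}_{/X})^{\otimes}$ (Lurie, \emph{Higher Algebra}, \S2.2.1);
\item use the lax/oplax monoidal mate correspondence \cite[Corollary C]{monoidal_mate_equiv}, rather than \Cref{prop:oplax-vs-lax-lim-for-adjoints-diagram}, which is a statement about (op)lax limits and not about transporting diagrams.
\end{itemize}

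Your strongness check then suffices with one extra line. The mate identifies the oplax structure on $L_\tau\circ\Lan_{(f^*)^{\op}}$ with that of the sheaf-level transition composed with $L_\tau$. Since $L_\tau$ is strong and essentially surjective, the sheaf-level transition is strong.
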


\begin{proof}
    We closely follow the argument of \cite[Lemma 4.12]{LNP} in this proof. Let us first ignore the symmetric monoidal structures. Consider the coCartesian unstraightening $\Un^{\co} \cP^{\cC}(\cG^{\ad}_{/-})\to \cG^{\op}$ of the functor $\cP^{\cC}(\cG_{/-}^{\ad}) \colon \cG^{\op} \to \Cat_{\infty}, \ X \mapsto \cP^{\cC}(\cG^{\ad}_{/X})$ that we are taking a partially lax limit over in \Cref{thm:presheaves-as-lax-lim}. Let $\cE\subseteq \Un^{\co} \cP^{\cC}(\cG^{\ad}_{/-})$ denote the full subcategory spanned by $\Sh^{\cC}(\cG^{\ad}_{/X})$ for all $X\in \cG$. 
    
    \medskip \noindent First, the composite $\cE \to \Un^{\co} \cP^{\cC}(\cG^{\ad}_{/-}) \to \cG^{\op}$ is a coCartesian fibration, as shown in the proof of \cite[Lemma 4.12]{LNP}.\footnote{But note that $\cE \to \Un^{\co} \cP^{\cC}(\cG^{\ad}_{/-})$ does not need to preserve all coCartesian edges. More precisely, \cite[Lemma 4.12]{LNP} shows that a coCartesian lift is given by a coCartesian lift in $\Un^{\co} \cP^{\cC}(\cG^{\ad}_{/-})$ followed by the localization map in the fiber.} This uses that for $f \colon X \to Y$ the induced map
    \[ \Lan_{(f^*)^{\op}} \colon \cP^{\cC} \left(\cG_{/Y}^{\ad} \right) \longrightarrow \cP^{\cC} \left(\cG^{\ad}_{/X} \right) \]
    preserves $L$-equivalences, where $L$ denotes sheafification. By an adjunction argument this is equivalent to checking that the associated right adjoint $f^*$ preserves sheaves. This follows because the pullback of a $\tau$-cover is again a $\tau$-cover. The straightening of $\cE \to \cG^{\op}$ is the desired functor $\Sh(\cG^{\ad}_{/-};\cC)$. 
    
    \medskip \noindent Now, the map $\cE \to \Un^{\co} \cP^{\cC}(\cG^{\ad}_{/-})$ fiberwise has a left adjoint. We wish to show that it assembles into a relative left adjoint $\widetilde{L} \colon \Un^{\co} \cP^{\cC}(\cG^{\ad}_{/-}) \to \cE$. By \cite[Proposition D.7]{bachmannHoyois}  it suffices to check for $f \colon X \to Y$ the induced map $\Lan_{(f^*)^{\op}} \colon \cP^{\cC} \left(\cG_{/Y}^{\ad} \right) \longrightarrow \cP^{\cC} \left(\cG^{\ad}_{/X} \right)$
    preserves $L$-equivalences, which we just checked. Moreover, $\widetilde{L}$ is automatically a map of coCartesian fibrations, see e.g.~the proof of \cite[Proposition 2.2.5(2)]{hilman2024parametrisedpresentability}. By straightening $\widetilde{L}$ we have thus built the natural transformation $L$ claimed in the theorem.
    
    \medskip \noindent Furthermore, the inclusion $\cE \hookrightarrow \Un^{\co} \cP^{\cC}(\cG^{\ad}_{/-})$ preserves coCartesian lifts of admissibles. For this, we need to show that for every admissible morphism $f \colon X \to Y$ the functor $\Lan_{(f^*)^{\op}}$ preserves $\tau$-sheaves. By \Cref{obv:left-Kan-along-admissible} this amounts to showing that if $\Phi\colon (\cG_{/Y}^{\ad})^{\op}\to \cC $ is a $\tau$-sheaf, then the composite 
    \begin{center}
        \begin{tikzcd}
            \left(\cG^{\ad}_{/X} \right)^{\op} \arrow[r, "f \circ -"] & \left( \cG^{\ad}_{/Y} \right)^{\op} \arrow[r, "\Phi"] & \cC
        \end{tikzcd}
    \end{center}
    is a $\tau$-sheaf as well. This holds as $\tau$-covers in $\cG_{/Z}^{\ad}$ are detected in $\cG$ for all $Z\in \cG$. 
    
    \medskip \noindent By \cref{remark: berman theorem} the partially lax limits of $\Sh(\cG^{\ad}_{/-};\cC)$ and $\cP^{\cC}(\cG^{\ad}_{/-})$ are computed by the $\infty$-categories of those sections of the respective cocartesian fibrations that send admissible morphisms to coCartesian morphisms and $\parlaxlim L$ is given by postcomposition with the coCartesian edges preserving functor $\tilde{L}$. Therefore, postcomposition with the inclusion $\cE\into  \Un^{\co} \cP(\cG^{\ad}_{/-})$ gives a fully faithful right adjoint to $\parlaxlim L$. 
    
    \medskip \noindent So the essential image of the right adjoint to $\parlaxlim L$ consists of those presheaves $\Phi\colon \cG^{\op}\to \cC$ satisfying for all $X\in \cG$ that the composite functor
    \begin{center}
        \begin{tikzcd}
            \left(\cG_{/X}^{\ad} \right)^{\op} \arrow[r, hookrightarrow] & \left(\cG_{/X} \right)^{\op} \arrow[r] & \cG^{\op} \arrow[r, "\Phi"] & \cC
        \end{tikzcd}
    \end{center}
    is a sheaf. \Cref{lem:characterization-of-sheaves} implies that the essential image of the right adjoint consists precisely of the $\tau$-sheaves.
    
    \medskip \noindent Finally, we apply \cite[Corollary 4.14]{LNP} to $F\coloneqq \cP^{\cC}(\cG^{\ad}_{/-})$ and $\left\{GX\coloneqq \Sh^{\cC}(\cG^{\ad}_{/X}) \right\}_{X\in \cG}$ to lift $L$ to a symmetric monoidal transformation.
\end{proof}

\begin{remark}\label{thm:main-theorem-hyper}
    The analogous statement of \Cref{thm:main-theorem} holds for sheaves replaced by hypersheaves. Indeed, the proof of \cref{thm:main-theorem} goes through in this setting.
\end{remark}

\begin{remark}\label{rem:modules}
By \cite[Theorem ~5.10]{LNP}, we can also pass to modules over any $R\in \CAlg(\Sh(\cG;\cC))$ in \Cref{thm:main-theorem} to obtain a symmetric monoidal equivalence 
\begin{align*}
    \Mod_R \left(\Sh(\cG;\cC) \right)\isoo \parlaxlim_{X \in \cG^{\op}} \Mod_{R_X}\left(\Sh(\cG^{\ad}_{/X};\cC) \right),
\end{align*}
where $R_X$ denotes the restriction of $R$ to $\Sh(\cG^{\ad}_{/X};\cC)$.
\end{remark}

\begin{corollary} \label{corollary: R of Sh}
     Let $(\cG, \cG^{\ad}, \tau)$ be a geometric site with terminal object $* \in \cG$ and $\mathcal{R} \colon \mathbb{C} \to \cD$ be a lax-invariant functor, where $\mathbb{C} \in \{\CAlg(\Pr^L), \CAlg(\Pr^L_{\mathrm{st}}) \}$ and $\cD$ is any $\infty$-category. Let $\cC \in \mathbb{C}$. Then, the restriction functor induces an equivalence
     \[ \cR(\Sh(\cG;\cC)) \xrightarrow{\ \simeq \ } \cR \left( \Sh(\cG^{\ad}_{/*};\cC) \right). \]
\end{corollary}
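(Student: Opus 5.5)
We deduce this from \Cref{thm:main-theorem} together with lax-invariance. The plan has four steps.

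First, we identify $\Sh(\cG;\cC)$ with $\parlaxlim_{X\in\cG^{\op}}\Sh(\cG^{\ad}_{/X};\cC)$ as objects of $\mathbb{C}$. \Cref{thm:main-theorem} exhibits $\Sh(\cG;\cC)$ as the essential image of the fully faithful right adjoint of $\parlaxlim L$. Since this right adjoint is fully faithful, its essential image is equivalent to the partially lax limit. The symmetric monoidal structure on $\Sh(\cG;\cC)$ is the sheafification of the pointwise one, and $\parlaxlim L$ is symmetric monoidal and restricts to the identification. Hence we get a symmetric monoidal equivalence $\Sh(\cG;\cC)\simeq \parlaxlim_{X}\Sh(\cG^{\ad}_{/X};\cC)$ in $\mathbb{C}$. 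Under it, a sheaf $\Phi$ corresponds to the family of its restrictions along $\fgt\colon \cG^{\ad}_{/X}\to\cG$.

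We also need each $\Sh(\cG^{\ad}_{/X};\cC)$, and each transition functor, to lie in $\mathbb{C}$. The categories are presentably symmetric monoidal, and stable if $\cC$ is. The transition functors are $L_\tau\circ\Lan_{(f^*)^{\op}}$, which are colimit preserving and symmetric monoidal by the proof of \Cref{thm:main-theorem}. So the diagram factors through $\mathbb{C}$.

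Second, we apply lax-invariance (\Cref{def: strictifying functor}). It says that $\cR(\lim_{\cG^{\op}} \Sh(\cG^{\ad}_{/-};\cC))\to \cR(\parlaxlim \Sh(\cG^{\ad}_{/-};\cC))$ is an equivalence.

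Third, we compute the limit. Since $*$ is terminal in $\cG$, it is initial in $\cG^{\op}$, so the limit over $\cG^{\op}$ is computed by evaluation at $*$. Concretely, the projection $\lim_{\cG^{\op}}F\to F(*)$ is an equivalence in $\mathbb{C}$, because limits in $\mathbb{C}$ are computed underlying.

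Fourth, we check that the composite equivalence
\[ \cR(\Sh(\cG;\cC))\simeq\cR\bigl(\parlaxlim F\bigr)\xleftarrow{\ \simeq\ }\cR(\lim F)\xrightarrow{\ \simeq\ }\cR(F(*)) \]
is induced by restriction, which is the main point requiring care. For this, let $p\colon\parlaxlim F\to F(*)$ denote projection to the $*$-component; it is symmetric monoidal and colimit preserving, since both are computed pointwise. The composite $\lim F\hookrightarrow\parlaxlim F\xrightarrow{p}F(*)$ is the limit projection, which is an equivalence. Applying $\cR$, the map $\cR(p)$ becomes a left inverse of an equivalence, hence is itself an equivalence. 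Composing with the identification from the first step, $p$ is exactly the restriction functor $\Sh(\cG;\cC)\to\Sh(\cG^{\ad}_{/*};\cC)$ along $\fgt$, which gives the claim.
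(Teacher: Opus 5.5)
Your proposal is correct and follows the paper's proof, which applies \cref{mainthm: A} (i.e.\ \Cref{thm:main-theorem}), then lax-invariance, and then observes that the limit over $\cG^{\op}$ is evaluation at the initial object $*$. Your fourth step uses two-out-of-three to show that the resulting equivalence is $\cR$ of the projection $p$, which is restriction along $\fgt$; this makes explicit a compatibility that the paper leaves implicit.
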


\begin{proof} 
     Apply \cref{mainthm: A}. The limit is then computed on the initial object $* \in \cG^{\op}$.
\end{proof}

\noindent We now apply \cref{mainthm: A} to our condensed example (\cref{example: condensed geometric site}).

\begin{corollary} \label{corollary: Cond as lax limit}
     Let $\cC$ be a presentably symmetric monoidal $\infty$-category. Then, there is an equivalence 
     \[ \Cond(\cC) \simeq \parlaxlim_{X \in \ExtrDisc^{\op}} \Sh(X;\cC) \]
     of symmetric monoidal $\infty$-categories, where the open embeddings are marked. In particular, there is an equivalence $\cR(\Cond(\cC)) \simeq \cR(\cC)$ for lax-invariant functors $\cR$.
\end{corollary}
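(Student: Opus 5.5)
We plan to specialize \cref{thm:main-theorem} to the geometric site $(\ExtrDisc, \ExtrDisc^{\open}, \tau)$ of \cref{example: condensed geometric site}. That it is a geometric site is \cite[Theorem A]{rasekhzhu2026fracturedstructurescondensedmathematics}. By \cref{thm:main-theorem}, the symmetric monoidal functor $\parlaxlim L$ out of $\cP^{\cC}(\ExtrDisc)$ has a fully faithful right adjoint with essential image $\Sh(\ExtrDisc;\cC)=\Cond(\cC)$. Hence $\parlaxlim L$ restricted to sheaves is an equivalence
\[ \Cond(\cC) \simeq \parlaxlim_{S\in \ExtrDisc^{\op}} \Sh\left(\ExtrDisc^{\open}_{/S};\cC\right). \]
This equivalence is symmetric monoidal for the sheafified pointwise structure, because $\parlaxlim L$ is symmetric monoidal and a localization. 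The marking is by $\ExtrDisc^{\open}$, i.e.\ the open embeddings.

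Next we need to replace each petit term by $\Sh(S;\cC)$. \cite[Theorem B]{rasekhzhu2026fracturedstructurescondensedmathematics} gives $\Sh(\ExtrDisc^{\open}_{/S};\cC)\simeq \Sh(S;\cC)$ for each $S$. However, we need this as a natural equivalence of functors $\ExtrDisc^{\op}\to \Catl^{\otimes}$. This naturality is the main obstacle.

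First, we would realize the comparison as arising from a morphism of sites. The site $\ExtrDisc^{\open}_{/S}$ consists of open embeddings into $S$, i.e.\ clopen subsets of $S$, since $S$ is compact. The site $\mathrm{Open}(S)$ is the full poset of opens. The inclusion of clopens into opens induces restriction $\Sh(S;\cC)\to\Sh(\ExtrDisc^{\open}_{/S};\cC)$, which is an equivalence because clopens form a basis closed under finite intersections.

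Second, for $f\colon S\to T$, restriction commutes with pullback $f^{-1}$ on opens up to the evident identification. So these equivalences assemble into a natural transformation of the right-adjoint diagrams $S\mapsto$ pushforward. Passing to left adjoints via mates, as in \cref{prop:oplax-vs-lax-lim-for-adjoints-diagram}, then yields a natural equivalence of the left-adjoint diagrams $\Sh(\ExtrDisc^{\open}_{/-};\cC)\simeq \Sh(-;\cC)$. On the petit side the left adjoints are sheafified left Kan extension and $f^*$, respectively. Symmetric monoidality is pointwise and is preserved by these equivalences, since the equivalences are restrictions along finite-limit-preserving maps of sites.

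Finally, the statement about lax-invariant $\cR$ follows from \cref{corollary: R of Sh}. The object $\ast\in\ExtrDisc$ is terminal and $\Sh(\ExtrDisc^{\open}_{/\ast};\cC)\simeq\Sh(\ast;\cC)\simeq\cC$, which gives $\cR(\Cond(\cC))\simeq\cR(\cC)$.
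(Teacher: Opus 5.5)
Your proposal is correct and follows the paper's route. You specialize \cref{thm:main-theorem} to the geometric site of \cite[Theorem A]{rasekhzhu2026fracturedstructurescondensedmathematics}, identify the petit terms via \cite[Theorem B]{rasekhzhu2026fracturedstructurescondensedmathematics}, and deduce the last claim from \cref{corollary: R of Sh} at the terminal object. Your extra argument that $\Sh(\ExtrDisc^{\open}_{/S};\cC)\simeq\Sh(S;\cC)$ is natural in $S$ (clopens form a basis, strict compatibility with $f^{-1}$, then passing to left adjoints) spells out a point the paper leaves implicit in its citation; the symmetric monoidal coherence of that natural equivalence is only sketched in your write-up, but the paper does not address it at all.
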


\begin{proof}
     A geometric site structure on $\ExtrDisc$ was exhibited in \cite[Theorem A]{rasekhzhu2026fracturedstructurescondensedmathematics} and moreover the associated slice categories were computed in \cite[Theorem B]{rasekhzhu2026fracturedstructurescondensedmathematics}. Combined with \cref{mainthm: A} we arrive at the desired. The last part follows from \cref{corollary: R of Sh}.
\end{proof}

\noindent We obtain similar descriptions in the setting of \cref{example: alggeo geometric site} and \cref{example: geometry geometric site}.

\section{Gros étale exodromy} \label{section: gros exodromy}
We recall that the \tdef{condensed Galois $\infty$-category} of $X$ from \cite{barwick2020exodromy, haine2025condensedhomotopytypescheme, mair2026galoiscategoriescondensedcontractible, debruyn2026condensedproofproetaleetale} is
\[ \mdef{\Gal(X)} \colon \ExtrDisc^{\op} \longrightarrow \Cat_{\infty}, \ S \mapsto \Fun^*_{\mathrm{loc} \, \mathrm{coh}}(\Sh(X_{\et}), \Sh(S)), \]
which for $S \in \ExtrDisc^{\op}$ consists of locally coherent geometric morphisms $\Sh(S) \to \Sh(X_{\et})$, see \cite[2.1.6]{debruyn2026condensedproofproetaleetale}. It is functorial in $X$ because maps of schemes induce locally coherent morphisms between their associated étale topoi. This object plays a central role in Barwick--Glasman--Haine's exodromy philosophy \cite{barwick2020exodromy}. Before stating it, we need to recall some notation about condensed $\infty$-categories.
\begin{enumerate}
    \item For two functors $\cC, \cD \colon \ExtrDisc^{\op} \to \Cat_{\infty}$ we denote by
\[ \mdef{\Fun^{\cts}(\cC, \cD)} \coloneqq \Nat(\cC,\cD) \, \simeq \, \lim_{(S \to S') \in \Tw(\ExtrDisc^{\op})^{\op}} \Fun(\cC(S), \cD(S')) \]
the $\infty$-category of \tdef{continuous functors} from $\cC$ to $\cD$, see \cite[Definition 3.1.1]{debruyn2026condensedproofproetaleetale}. 
    \item Let $\cC$ be a compactly assembled presentable $\infty$-category. The condensed $\infty$-category $\mdef{\ul{\cC}}$ is defined by
    \[ \ul{\cC} \colon \ExtrDisc^{\op} \longrightarrow \Cat_{\infty}, \ S \mapsto \Sh(S;\cC), \]
    see \cite[Definition 3.2.4]{debruyn2026condensedproofproetaleetale} for more details.
\end{enumerate}
Work of van Dobben de Bruyn provides a (petit version) of étale exodromy:

\begin{fact}[{\cite[Theorem 2]{debruyn2026condensedproofproetaleetale}}] \label{fact: remy}
    Let $X$ be a scheme, and let $\cC$ be a compactly assembled presentable $\infty$-category. Then, the pro-étale exodromy correspondence restricts to an étale exodromy correspondence
    \[ \mathrm{Ex}^{\et} \colon \Sh^{\post}(X_{\et}; \cC) \xrightarrow{\ \simeq \ } \Fun^{\cts}(\Gal(X), \ul{\cC}). \]
    Here, the pro-étale exodromy map is constructed in the proof of \cite[Theorem 1]{debruyn2026condensedproofproetaleetale}.
\end{fact}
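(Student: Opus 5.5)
Since this is van Dobben de Bruyn's theorem, which we only cite, the following is a sketch of how one would reprove it rather than an argument we rely on. The plan has two stages: first the pro-étale exodromy equivalence, then its restriction to the étale topos.

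The construction of $\mathrm{Ex}^{\et}$ is pointwise. A point of $\Gal(X)(S)$ is a locally coherent geometric morphism $p\colon \Sh(S)\to \Sh(X_{\et})$. To a $\cC$-valued étale sheaf $\cF$ we assign $p^*\cF \in \Sh(S;\cC)=\ul{\cC}(S)$. This uses the $\cC$-linear extension of $p^*$, which is available because $\cC$ is compactly assembled, hence dualizable, so $\Sh(-;\cC)\simeq \Sh(-)\otimes \cC$. These assignments are natural in $p$ and in $S\in\ExtrDisc^{\op}$. Using the end formula for $\Fun^{\cts}$, they assemble into a functor $\Sh^{\post}(X_{\et};\cC)\to \Fun^{\cts}(\Gal(X),\ul{\cC})$. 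It factors as pullback $\nu^*$ to the pro-étale topos followed by the pro-étale exodromy map of \cite[Theorem 1]{debruyn2026condensedproofproetaleetale}.

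We then prove the pro-étale statement, $\Sh(X_{\mathrm{pro\acute et}};\cC)\simeq \Fun^{\cts}(\Gal(X),\ul{\cC})$. The key input is that $X_{\mathrm{pro\acute et}}$ has a basis of w-contractible affines $U$. Their étale topoi are spectral, with profinite spaces of points, and after passing to extremally disconnected covers they are controlled by maps $S\to X$ from $S\in\ExtrDisc$ factoring through points of $\Gal(X)(S)$. Evaluating a continuous functor on these data should recover a sheaf on the w-contractible basis. Descent along finitely jointly surjective covers in $\ExtrDisc$ should match descent in the pro-étale topology.

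Next we restrict to étale sheaves. The functor $\nu^*$ is fully faithful on Postnikov complete étale sheaves, so the composite $\mathrm{Ex}^{\et}$ is fully faithful. The remaining and main obstacle is essential surjectivity, namely showing that every continuous functor lies in the image of Postnikov complete étale sheaves. The argument I would try first has three steps. Reduce to $\cC=\Sp$ (or $\An$) by writing $\cC$ as a retract of a compactly generated category and using that both sides are compatible with $-\otimes\cC$ for dualizable $\cC$. Then use that $\Gal(X)$ is built only from the étale topos: a continuous functor $\Gal(X)\to\ul{\cC}$ is a continuous family of stalk data, and such a family is exactly what a pro-étale sheaf coming from $X_{\et}$ provides. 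Finally, use that $\Sh(S)$ is Postnikov complete for $S\in\ExtrDisc$ to see that the corresponding sheaf is Postnikov complete. The size condition on $X$ enters when choosing these covers.
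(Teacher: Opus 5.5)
The paper gives no proof of this statement: it imports it from van Dobben de Bruyn. The paper itself only uses its $\An_{\le n}$-valued form, then passes to the limit over $n$ and tensors with $\cC$ as in \cite[Theorem 3.2.6]{debruyn2026condensedproofproetaleetale}.

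Your reconstruction has a genuine error in its architecture. You take the pro-étale correspondence to have the \emph{same} target, $\Sh(X_{\text{proét}};\cC)\simeq\Fun^{\cts}(\Gal(X),\ul{\cC})$ with $\ul{\cC}(S)=\Sh(S;\cC)$. That is false. Take $X=\operatorname{Spec}k$ with $k$ separably closed. Then $\Sh(X_{\et})\simeq\An$, so $\Gal(X)(S)\simeq\ast$ for every $S$, and $\Fun^{\cts}(\Gal(X),\ul{\cC})\simeq\lim_{S\in\ExtrDisc^{\op}}\Sh(S;\cC)\simeq\cC$. On the other hand, the pro-étale site of $X$ is (up to size) the site of profinite sets, so $\Sh(X_{\text{proét}};\cC)\simeq\Cond(\cC)$. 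Consequently your essential-surjectivity step, as formulated, would force every pro-étale sheaf to be étale, i.e.~$\nu^*$ to be essentially surjective. This fails already here, since the image of $\nu^*$ consists of the discrete condensed objects.

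What is missing is the relation between the two targets. The pro-étale correspondence (as in \cite{wolf2022}) lands in continuous functors into a \emph{condensed} coefficient object. Its value at $S$ is condensed objects over $S$ (for $\cC=\An$, this is $\Cond(\An)_{/S}$), and $\ul{\cC}$ sits inside it as the objects coming from sheaves on the topological space $S$. ``Restricts to'' in the statement means that the pro-étale equivalence matches Postnikov complete étale sheaves with exactly those continuous functors that land pointwise in $\ul{\cC}$.

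Once this is set up, your full-faithfulness step survives. The real content, however, is a pointwise criterion for being étale: a truncated pro-étale sheaf lies in the essential image of $\nu^*$ if and only if its pullback along every locally coherent point $\Sh(S)\to\Sh(X_{\et})$ with $S\in\ExtrDisc$ lies in $\Sh(S)\subseteq\Cond(\An)_{/S}$. You also need to check that on étale sheaves the pro-étale exodromy is given by your formula $\cF\mapsto p^*\cF$. Your remark that a continuous family of stalk data ``is exactly what a pro-étale sheaf coming from $X_{\et}$ provides'' restates this criterion rather than proving it.

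Only after the $\An_{\le n}$ case is established may you pass to $\lim_n$ and tensor with $\cC$. That is where compact assembly is used: dualizability lets $-\otimes\cC$ commute with these limits. It is not needed to define $p^*$ on $\cC$-valued sheaves, since $\Sh(-;\cC)\simeq\Sh(-)\otimes\cC$ holds for any presentable $\cC$.
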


\begin{remark}
    One can check by hand that $\mathrm{Ex}^{\et}$ is natural in $X$, by first checking a $1$-categorical naturality and then applying functorial $\infty$-categorical machinery.
\end{remark}

\noindent We will now use \cref{mainthm: A} to deduce a gros version of this result. Throughout, let us denote by $\Sch$ the (1-)category of $\kappa$-small qcqs schemes.

\begin{definition} \label{def: gros Galois category}
    Let the \tdef{gros Galois $\infty$-category} be 
    \[ \mdef{\GAL} \coloneqq \laxcolimdag_{X \in \Sch} \Gal(X) \in \Fun \left(\ExtrDisc^{\op}, \Cat_{\infty} \right), \]
    where we mark the étale morphisms, and the partially lax colimit is taken pointwise.
\end{definition}

\begin{remark}
    For our purposes, it is not needed whether $\GAL$ is a condensed $\infty$-category, i.e.~a sheaf, and we do not know whether this is true.
\end{remark}

\begin{theorem} \label{theorem: gros etale exodromy}
    Let $\cC$ be a compactly assembled presentable $\infty$-category. Then, there is a preferred étale exodromy equivalence
    \[ \Sh^{\post}(\Sch_{\et}; \cC) \xrightarrow{\ \simeq \ } \Fun^{\cts}(\GAL, \ul{\cC}) \]
    of $\infty$-categories. 
\end{theorem}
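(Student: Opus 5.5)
The strategy is to express both sides of the claimed equivalence as the same partially lax limit over $\Sch^{\op}$, with the étale morphisms marked. On the left we use \cref{mainthm: A}; on the right we use the universal property of the partially lax colimit defining $\GAL$. We then match the two sides factorwise using \cref{fact: remy}.

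\emph{Left-hand side.} By \cref{example: alggeo geometric site}, the triple $(\Sch, \Sch^{\et}, \et)$ is a geometric site, and for each $X$ the slice $\Sch^{\et}_{/X}$ is the small étale site $X_{\et}$, possibly after restricting to qcqs étale $X$-schemes, which does not change the topos. Applying \cref{thm:main-theorem} together with its hypersheaf variant \cref{thm:main-theorem-hyper}, we expect
\[ \Sh^{\post}(\Sch_{\et};\cC) \simeq \parlaxlim_{X\in \Sch^{\op}} \Sh^{\post}(X_{\et};\cC). \]
Postnikov completeness is not literally hypercompleteness, so a small argument is needed here. The plan is to check that Postnikov-complete objects are detected factorwise: Postnikov towers in the partially lax limit are computed factorwise, and restriction to each $X_{\et}$ commutes with truncations, since truncation is computed locally on the étale site. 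Hence the equivalence restricts to the Postnikov-complete subcategories on both sides, and the proof of \cref{thm:main-theorem} goes through verbatim with $\Sh^{\post}$ in place of $\Sh$. The transition functors are pullback along $f\colon X\to Y$, and they are equivalences on the étale morphisms.

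\emph{Right-hand side.} Continuous functors are natural transformations in $\Fun(\ExtrDisc^{\op},\Cat_\infty)$, and the partially lax colimit defining $\GAL$ is computed pointwise. The $(\infty,2)$-categorical universal property of $\laxcolimdag$, i.e.\ \cref{defn:lax-limit} applied to cocones, then gives
\[ \Fun^{\cts}(\GAL,\ul{\cC}) = \Nat\Bigl(\laxcolimdag_{X} \Gal(X), \ul{\cC}\Bigr) \simeq \laxlimdag_{X\in\Sch^{\op}} \Fun^{\cts}(\Gal(X),\ul{\cC}). \]
To make this rigorous, we use the end/coend formulas of \cref{remark: berman theorem}. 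The functor $\Nat(-,\ul{\cC})$ turns the coend into an end, and the term $\Nat(|(\cI^{\op})^\dagger_{/j}|\times \Gal(i),\ul{\cC}) \simeq \Fun(|\dots|, \Fun^{\cts}(\Gal(i),\ul{\cC}))$ is exactly the term in the end formula for $\parlaxlim$ of the diagram $X\mapsto \Fun^{\cts}(\Gal(X),\ul{\cC})$ indexed by $\Sch^{\op}$. Part of this step is to check that the resulting variances and markings agree: the lax colimit is over $\Sch$, while the lax limit is over $\Sch^{\op}$, with étale maps marked in both.

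\emph{Matching.} Finally, we apply \cref{fact: remy} factorwise. This needs the equivalences $\mathrm{Ex}^{\et}_X$ to assemble into a natural equivalence of functors $\Sch^{\op}\to\Catl_\infty$, compatible with pullback on sheaves and with precomposition by $\Gal(f)$. The remark following \cref{fact: remy} provides this naturality. A natural equivalence of diagrams induces an equivalence of partially lax limits, and composing the three equivalences proves the theorem.

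The main obstacle is promoting the naturality of $\mathrm{Ex}^{\et}$ to a fully coherent natural transformation. The approach is to check commutativity of the relevant squares at the level of homotopy categories, using the explicit pro-étale construction in \cite{debruyn2026condensedproofproetaleetale}. Both sides are then realized as restrictions of functors defined by universal constructions (pullback of geometric morphisms and left Kan extension), which yields the coherences for free. The second, smaller difficulty is the Postnikov-completeness compatibility in the left-hand identification.
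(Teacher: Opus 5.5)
\textbf{There is a gap in the left-hand step.} Your overall architecture matches the paper's. The left side becomes a partially lax limit via \cref{thm:main-theorem}. The identification $\Fun^{\cts}(\GAL,\ul{\cC})\simeq\parlaxlim_{X}\Fun^{\cts}(\Gal(X),\ul{\cC})$ comes from dualizing the coend formula of \cref{remark: berman theorem} into the end formula. The two sides are then matched factorwise using \cref{fact: remy} and the naturality of $\mathrm{Ex}^{\et}$, which the paper also only asserts. The problem is that you run \cref{thm:main-theorem} directly for $\Sh^{\post}(-;\cC)$.

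\emph{General coefficients.} For a general compactly assembled $\cC$, a $\cC$-valued sheaf has no Postnikov tower, so ``Postnikov-complete objects are detected factorwise'' has no content. $\Sh^{\post}(-;\cC)$ has to be reached from the anima-valued case by tensoring with $\cC$. That is where compact assembly (dualizability in $\PrL$) must be used, and your plan never uses it outside \cref{fact: remy}.

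\emph{Anima coefficients.} Even for $\cC=\An$, your factorwise argument only shows that the equivalence restricts to the full subcategory of $\parlaxlim_X\Sh(X_{\et})$ on factorwise Postnikov-complete objects. That subcategory is not visibly a partially lax limit of $X\mapsto\Sh^{\post}(X_{\et})$ ``with pullback as transition functors''. For non-étale $f$, $f^*$ commutes with truncations but not with $\lim_n$, so there is no reason it preserves Postnikov-complete objects. The natural functoriality of $\Sh^{\post}(-)=\lim_n\Sh(-;\An_{\le n})$ is the completed pullback $\lim_n\tau_{\le n}f^*$, and this is what corresponds to precomposition with $\Gal(f)$ under $\mathrm{Ex}^{\et}$. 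Comparing the partially lax limit of that diagram with your subcategory would need an analogue of the localization argument in the proof of \cref{thm:main-theorem}: a reflection onto Postnikov-complete objects compatible with all $f^*$. You neither state nor prove this, and it is not formal. So ``the proof goes through verbatim with $\Sh^{\post}$'' is unjustified.

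\textbf{The missing idea is the paper's reduction to truncated coefficients.}
\begin{enumerate}
\item For $\An_{\le n}$ one has $\Sh^{\post}(-;\An_{\le n})=\Sh(-;\An_{\le n})$. Every pullback preserves $n$-truncated sheaves, and \cref{thm:main-theorem} applies literally.
\item Combining this with \cref{fact: remy} for $\An_{\le n}$ and your end/coend computation gives $\Sh(\Sch_{\et};\An_{\le n})\simeq\Fun^{\cts}(\GAL,\ul{\An}_{\le n})$.
\item Pass to the limit over $n$. Limits commute with $\parlaxlim$ and with $\Fun^{\cts}(\GAL,-)$, and $\lim_n\ul{\An}_{\le n}\simeq\ul{\An}$.
\item Finally tensor with $\cC$ as in the proof of van Dobben de Bruyn's Theorem 3.2.6. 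This is where compact assembly enters.
\end{enumerate}
With this reordering, your right-hand-side and matching steps go through unchanged.
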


\begin{proof}
     It's enough to prove that the $n$-truncated version
     \[ \Sh(\Sch_{\et};\An_{\leq n}) \longrightarrow \Fun^{\cts}(\GAL, \ul{\An}_{\leq n}) \]
     is an equivalence. As in \cite[Theorem 3.2.6]{debruyn2026condensedproofproetaleetale} we can pass to the limit and tensor by $\cC$ to finish. 
     
     \medskip \noindent Applying $\parlaxlim$ to \cref{fact: remy} combined with our \cref{mainthm: A} we deduce
     \[ \Sh(\Sch_{\et};\An_{\leq n}) \xrightarrow{\ \simeq \ } \parlaxlim_{X \in \Sch^{\op}} \Sh(X_{\et}; \An_{\leq n}) \xrightarrow{\ \simeq \ } \parlaxlim_{X \in \Sch^{\op}} \Fun^{\cts}(\Gal(X), \ul{\An}_{\leq n}), \]
     so we are left to identify the right side. We do this here:
     \begin{align*}
         \parlaxlim_{X \in \Sch^{\op}} \Fun^{\cts}(\Gal(X), \ul{\An}_{\leq n}) &\simeq \lim_{(X \to X') \in \Tw(\Sch^{\op})^{\op}} \Fun \left(|\Sch_{/X}^{\op \dagger}|, \Fun^{\cts}(\Gal(X')(-), \ul{\An}_{\leq n}) \right)
         \\ &\simeq \lim_{X \to X'} \lim_{(S \to S') \in \Tw(\ExtrDisc^{\op})^{\op}} \Fun \left(|\Sch^{\op \dagger}_{/X}|, \Fun(\Gal(X')(S), \ul{\An}_{\leq n}(S')) \right)
         \\ &\simeq \lim_{X \to X'} \lim_{S \to S'} \Fun \left(\Gal(X')(S) \times |\Sch^{\op \dagger}_{/X}|, \ul{\An}_{\leq n}(S') \right)
         \\ &\simeq \lim_{S \to S'} \lim_{X \to X'} \Fun \left(\Gal(X')(S) \times |\Sch^{\op \dagger}_{/X}|, \ul{\An}_{\leq n}(S') \right)
         \\ &\simeq \lim_{S \to S'} \Fun \left( \colim_{(X \to X') \in \Tw(\Sch^{\op})} \Gal(X')(S) \times |\Sch^{\op \dagger}_{/X}|, \ul{\An}_{\leq n}(S') \right)
         \\ &\simeq \Fun^{\cts} \left( \laxcolimdag_X \Gal(X), \ul{\An}_{\leq n} \right),
     \end{align*}
     as desired.
\end{proof}

\begin{remark} \label{remark: exodromy base scheme}
    The same proof goes through for $\Sch_T$ with $\kappa$-small schemes qcqs over an arbitrary base scheme $T$ instead of $\Sch$. So let $\mdef{\GAL_T} \coloneqq \laxcolimdag_{X \in \Sch_T} \Gal(X)$, then there is an equivalence
    \[ \Sh^{\post}(\Sch_{T,\et};\cC) \simeq \Fun^{\cts}(\GAL_T, \ul{\cC}) \]
    of $\infty$-categories.
\end{remark}

\bibliographystyle{alpha}
\bibliography{main.bib}
\end{document}